\definecolor{forestgreen}{rgb}{0.13, 0.55, 0.13}
\newcommand{\Mod}[1]{\ (\mathrm{mod}\ #1)}
\pgfplotsset{compat=1.18}
\newcommand\blfootnote[1]{%
  \begingroup
  \renewcommand\thefootnote{}\footnote{#1}%
  \addtocounter{footnote}{-1}%
  \endgroup
}
\crefname{equation}{}{}
\crefname{lem}{Lemma}{Lemmas}
\crefname{section}{Section}{Sections}
\crefname{subsubsubsection}{Section}{Sections}
\crefname{rem}{Remark}{Remarks}
\crefname{figure}{Figure}{Figures}
\crefname{table}{Table}{Tables}
\Crefname{lem}{Lemma}{Lemmas}
\crefname{thm}{Theorem}{Theorems}
\Crefname{thm}{Theorem}{Theorems}
\newtheorem{theorem}{Theorem}[section]
\newtheorem{lemma}{Lemma}[section]
\newtheorem{corollary}{Corollary}[section]
\newtheorem{definition}{Definition}[section]
\title{Comparison Theorems for the Mixing Times of Systematic and Random Scan Dynamics\blfootnote{J.G. is supported by Vannevar Bush Faculty Fellowship ONR-N00014-20-1-2826 and Simons Investigator Award 622132. E.M. is supported in part by Vannevar Bush Faculty Fellowship ONR-N00014-20-1-2826, Simons Investigator Award 622132, and Simons-NSF DMS-2031883.}}
\author{Jason Gaitonde\\
Massachusetts Institute of Technology\\
\texttt{gaitonde@mit.edu}
\and Elchanan Mossel\\
Massachusetts Institute of Technology\\
\texttt{elmos@mit.edu}}
\begin{document}

\maketitle
\thispagestyle{empty}

\begin{abstract}
    A popular method for sampling from high-dimensional distributions is the \emph{Gibbs sampler}, which iteratively resamples sites from the conditional distribution of the desired measure given the values of the other coordinates. It is natural to ask to what extent does the order of site updates matter in the mixing time? Two  natural choices are (i) standard, or \emph{random scan}, Glauber dynamics where the updated variable is chosen uniformly at random, and (ii) the \emph{systematic scan} dynamics where variables are updated in a fixed, cyclic order. We first show that for systems of dimension $n$, one round of the systematic scan dynamics has spectral gap at most a factor of order $n$ worse than the corresponding spectral gap of a single step of Glauber dynamics, tightening existing bounds in the literature by He, et al. [NeurIPS '16] and Chlebicka,  Łatuszy\'nski, and Miasodejow [Ann. Appl. Probab. '25]. The corresponding bound on mixing times is sharp even for simple spin systems by an explicit example of Roberts and Rosenthal [Int. J. Statist. Prob. '15]. We complement this with a converse statement: if all, or even just one scan order rapidly mixes, the Glauber dynamics has a polynomially related mixing time, resolving a question of Chlebicka,  Łatuszy\'nski, and Miasodejow. 
    %On the conceptual level, our arguments employ direct and flexible linear-algebraic argument in tandem with recent results on the mixing times of non-reversible chains by Chatterjee in terms of Laplacian singular values [Ann. Appl. Probab. '25].
\end{abstract}

\newpage
\setcounter{page}{1}
\section{Introduction}
An important task in statistics, computer science, and machine learning is sampling from high-dimensional distributions. However, the ``curse of dimensionality'' can make this problem provably computationally hard in general~\cite{DBLP:conf/focs/Sly10,DBLP:conf/focs/SlyS12,DBLP:journals/corr/abs-2407-07645}. Even in settings where sampling may be possible, it is often the case that it is easy to determine the probability of a configuration up to a universal normalizing constant (also called the \emph{partition function}), but the exact computation of this normalization requires summing over (at least) exponentially many configurations in the dimension. Therefore, a popular option in both theory and practice is to run a suitable ergodic Markov chain on the state space that is easy to implement and whose stationary measure is the desired distribution. 

Of these methods, an extremely popular approach for sampling from high-dimensional distributions is the well-known \emph{Gibbs sampler} introduced by Geman and Geman~\cite{geman1984stochastic} inspired by ideas from statistical physics. The Gibbs sampler is particularly convenient since it is usually quite efficient to implement. %and may rapidly mix towards the stationary distribution (i.e. in only polynomial number of steps in the dimension). 
Concretely, the general form of these procedures is as follows: for some target distribution $\pi$ on a product space $\mathcal{X}:=\mathcal{X}_1\times\ldots\times \mathcal{X}_n$, consider any starting configuration $X^0\in \mathcal{X}$. At each time $t\geq 1$, the sampler chooses a coordinate $i_t\in [n]$ according to some predetermined (possibly randomized) rule and rerandomizes the $i_t$'th coordinate of $X^{t-1}$ according to the distribution of $\pi$ on $\mathcal{X}_{i_t}$ conditioned on the remaining coordinates agreeing with $X^{t-1}$. All other coordinates remain the same, and these conditional distributions are often computationally straightforward to compute. It is not difficult to show that $\pi$ is stationary with respect to these dynamics for any choice of indices $i_t$, and thus, the overall complexity of the sampling method is governed by the number of steps for which it needs to run to be close to stationarity.

Two natural ways to choose the coordinate $i_t$ to update at time $t$ are (i) standard (\emph{random scan}) Glauber dynamics, where $i_t\in [n]$ is chosen uniformly at random, and (ii) the \emph{systematic scan}, where $i_t = \sigma(t\Mod n)$ for some fixed choice of permutation $\sigma:[n]\to [n]$ so that the choice of sites to update cycles around the coordinates in some fixed order. For any choice of updates, the crucial quantity of interest is the \emph{mixing time} to stationarity: how many steps of the Gibbs sampler are needed until the distribution at time $T$ is close to $\pi$ in, say, total variation distance? Ideally, the process mixes in time $\mathsf{poly}(n)$ so that the overall sampling complexity scales only polynomially in the dimension.

The systematic scan is often used in practice due to several implementation advantages over Glauber dynamics~\cite{DBLP:conf/approx/DyerGJ06,DBLP:conf/sigmod/ZhangR13}. However, they often lack strong analytic guarantees on the mixing times due to the lack of reversibility. General comparisons between the two chains have been conjectured for several decades~\cite{diaconis,levin2017markov}, but it is known that the site ordering can in fact substantially affect the mixing time~\cite{roberts2015surprising,DBLP:conf/nips/HeSMR16}: simple examples show that either sampler can dominate the other by $\mathsf{poly}(n)$ factors in mixing time, different scan orders can have a $\mathsf{poly}(n)$ factor effect on the mixing time, and the random scan can lie asymptotically strictly between the fastest and slowest systematic scans. To the best of our knowledge, a precise \emph{worst-case} comparison of the spectral properties or mixing times of the systematic scan in terms of Glauber has remained open despite significant work comparing them in both special cases and general settings~\cite{amit,dyer_colors,diaconis_ram,DBLP:conf/approx/BlancaZ23,simon,DBLP:conf/approx/DyerGJ06,fishman,DBLP:conf/nips/HeSMR16,chlebicka}. In the other direction, establishing any explicit polynomial upper bound on the mixing time of Glauber dynamics in terms of the mixing times of the standard systematic scan seems to have remained open as well, appearing as a recent question of Chlebicka,  Łatuszy\'nski, and Miasodejow~\cite{chlebicka}.

In this work, we resolve both of these fundamental questions. In particular, we show the following:
\begin{enumerate}
    \item We \emph{completely resolve} the worst-case spectral gap of any systematic scan in terms of the spectral gap of Glauber dynamics: if the Glauber dynamics has spectral gap at least $\gamma$, then the full $n$ steps of the systematic scan has spectral gap at least $\Omega(\gamma/n)$, and this factor is sharp. To the best of our knowledge, this is even the first such comparison result where the ratio between the gaps 
    is independent of the original spectral gap, which can be exponentially small in many known examples where $\gamma = e^{-\Omega(n)}$. Since the scan has $n$ steps, this implies via standard relationships between spectral gaps and mixing times that the number of steps of the scan to mix is at most a factor of $n^2\log(1/\pi_{\mathsf{min}})$ larger than that of Glauber.\footnote{The logarithmic factor is inherent to spectral methods to analyzing the mixing times; it is an interesting question whether this factor can removed by comparing the modified log-Sobolev constants of these chains.} We thus prove that an explicit example of Roberts and Rosenthal~\cite{roberts2015surprising} (namely, the hardcore model on the complete graph) constitutes the provably worst-case instance of mixing times ratio between these chains (up to a constant).  
    \item Moreover, we establish a polynomial upper bound on the spectral gap of the Glauber dynamics in terms of the spectral gap of systematic scans. By leveraging a recent spectral gap inequality of Chatterjee~\cite{chatterjee2023spectral} on the singular values of the Laplacian, this in fact yields a polynomial upper bound in terms of mixing times as well due to the special orthogonality properties of the scan. To the best of our knowledge, this appears to be the first such bound and resolves a question (Open Problem 3.7) of Chlebicka,  Łatuszy\'nski, and Miasodejow~\cite{chlebicka}.
\end{enumerate}
As we describe further below, our analysis naturally extends to any sufficiently pseudorandom sequence of site re-samplings, which may have statistical applications in natural settings where Gibbs sampling is viewed as an \emph{exogenous} process describing a system's evolution rather than as a sampling method by a practitioner. An additional bonus of our approach is that it is much more direct, but at the same time sharper, than the significantly more involved functional-analytic approach of Chlebicka,  Łatuszy\'nski, and Miasodejow~\cite{chlebicka}. Their result, which we describe in more detail in \Cref{sec:related}, obtained the previous best, but suboptimal, bound on the spectral gap of the systematic scan that we are aware of. By contrast, our results rely on conceptually natural linear-algebraic and probabilistic arguments that elementary, flexible, and can lead to optimal bounds.

\subsection{Background and Our Results}
We now describe the background and our results in more detail. By now, a variety of advanced methods have been introduced to bound the mixing time of important Markov chains (see, e.g. \cite{levin2017markov,tetali} for textbook treatments). For high-dimensional distributions on product spaces, mixing time analyses are typically studied for the Glauber dynamics since this Markov chain enjoys several analytical advantages. For one, the Markov chain is reversible with respect to $\pi$ so that the mixing properties can be analyzed by well-understood spectral methods. In particular, the mixing time connects naturally to linear-algebraic machinery via the spectral theorem, making these dynamics significantly more amenable to a rigorous study in most applications. Moreover, the Glauber dynamics treats sites symmetrically and is the natural tensorized analogue of the transition operators that operates independently on each coordinate, as would be the case for product distributions. 

By contrast, the transition operator associated with the systematic scan is importantly \emph{not} typically reversible once $n>2$ except in simple settings, like when $\pi$ is a product measure. Even though the transitions are a product of self-adjoint operators, the lack of commutativity makes the analysis highly nontrivial. In fact, this issue is not just a feature of existing technical machinery: the actual site ordering can indeed significantly affect the mixing time since the application of the transitions varies deterministically depending on the iteration~\cite{roberts2015surprising,DBLP:conf/nips/HeSMR16}. Nonetheless, from the computational side, the fixed and sequential nature of the systematic scan is among several practical considerations that can affect runtime in real-world implementations~\cite{DBLP:conf/sigmod/ZhangR13}. No additional randomness is needed in the sequence of updated sites and moreover, one can exploit locality among adjacent sites in the updating process as well as parallelization across distant sites, which can yield practical gains given hardware considerations. For these reasons, experimental evaluations are often done using the systematic scan~\cite{DBLP:conf/approx/DyerGJ06}.

As a result, it is important to understand the relative performance between these two implementations of the Gibbs sampler. More generally, to what extent does the choice of site updates affect the mixing time of important high-dimensional distributions one wishes to sample from? From an analytical perspective, it may be significantly easier to prove mixing time bounds for Glauber dynamics for the reasons above; but then to what extent can these guarantees extend to the systematic scan? Obtaining (hopefully sharp) \emph{black-box} guarantees of this type provide concrete performance guarantees for the systematic scan.

It had originally been conjectured by Diaconis~\cite{diaconis} and Levin and Peres~\cite{levin2017markov} that, at least for important classes of distributions, the mixing time of the Glauber dynamics should be comparable to the mixing time of the systematic scan measured per step. That is, the total number of steps of either Gibbs sampler to reach some $\varepsilon$ distance to $\pi$ in total variation should be within $O(\log(n))$ factors of each other that are necessary for product measures.  However, such a result cannot hold in general: Roberts and Rosenthal~\cite{roberts2015surprising} as well as He, de Sa, Mitliagkas, and R\'e~\cite{DBLP:conf/nips/HeSMR16} construct several examples of models demonstrating that the relationship between mixing times and scan order is quite subtle: either sampler can dominate the other by $\mathsf{poly}(n)$ factors in mixing time, different scan orders can have a $\mathsf{poly}(n)$ factor effect on the mixing time, and the random scan can lie asymptotically strictly between the fastest and slowest scans. A comprehensive and rigorous analysis of conditions where one scan should perform as well as the other remains elusive, and even the \emph{worst-case} comparison of these Gibbs samplers appears not to have been resolved to our knowledge (see \Cref{sec:related} for existing bounds in the literature).

In this paper we make significant 
qualitative and quantitative progress  
in 
 resolving the worst-case ratios
 of the mixing times of these two important chains. We first prove the following result on mixing times. To state the result, let $t^{\mathsf{GD}}_{\mathrm{mix}}(\varepsilon)$ denote the time for Glauber dynamics to be within $\varepsilon$ of $\pi$ in total variation distance from any starting configuration, and let $t^{\mathsf{SS}(\sigma)}_{\mathrm{mix}}(\varepsilon)$ denote the time for Glauber dynamics to be within $\varepsilon$ of $\pi$ in total variation distance from any starting configuration, where the scan is done in order $\sigma:[n]\to [n]$ (see \Cref{sec:preliminaries} for more formal definitions).

\begin{theorem}
\label{thm:mixing}
    There exists an absolute constant $C>0$ such that the following holds for any $0<\varepsilon\leq 1$. Let $\mathcal{X}=\mathcal{X}_1\times \ldots\times \mathcal{X}_n$ be a product of finite state spaces\footnote{We expect that our structural results can be applied to more general state spaces, but we restrict to the finite case to avoid measure-theoretic complications.} and let $\pi$ be any distribution on $\mathcal{X}$. Then for any permutation $\sigma:[n]\to [n]$,
    \begin{equation*}
        t^{\mathsf{SS}(\sigma)}_{\mathrm{mix}}(\varepsilon)\leq Cnt^{\mathsf{GD}}_{\mathrm{mix}}(1/4)\log\left(\frac{1}{\varepsilon\pi_{\mathrm{min}}}\right),
    \end{equation*}
    where $\pi_{\mathrm{min}}:=\min_{\bm{x}\in \mathcal{X}:\pi(\bm{x})\neq 0} \pi(\bm{x})$. This bound is tight up to the logarithmic factor.
\end{theorem}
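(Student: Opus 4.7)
The plan is to reduce \Cref{thm:mixing} to a sharp spectral-gap comparison between a full round of the systematic scan and the Glauber dynamics, and then translate that comparison to mixing times via the standard $L^2$ route. Let $P_i$ denote the resampling operator for coordinate $i$, which is the self-adjoint orthogonal projection in $L^2(\pi)$ onto functions independent of the $i$th coordinate. Then $P_{\mathsf{GD}} = \tfrac{1}{n}\sum_i P_i$, and the one-round scan operator is $Q := P_{\sigma(1)} \cdots P_{\sigma(n)}$, which is generally not self-adjoint. Writing $\|\cdot\|$ for the $L^2(\pi)$ norm, the hypothesis that Glauber has spectral gap $\gamma$ is equivalent to the Dirichlet inequality $\sum_{i=1}^n \|(I-P_i)f\|^2 \geq n\gamma\|f\|^2$ for every $f \in L^2_0(\pi)$. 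The target is the contraction estimate $\|Qf\|^2 \leq (1 - \gamma/(4n))\|f\|^2$ on $L^2_0(\pi)$.

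To prove this, I would introduce the intermediate functions $f_k := P_{\sigma(k+1)} \cdots P_{\sigma(n)} f$ for $k = 0, \ldots, n$, so $f_n = f$ and $f_0 = Qf$. Since each $P_i$ is an orthogonal projection, $\|f_k\|^2 - \|f_{k-1}\|^2 = \|(I-P_{\sigma(k)})f_k\|^2$, yielding the telescoping identity
\[
A := \|f\|^2 - \|Qf\|^2 = \sum_{k=1}^n a_k, \qquad \text{where } a_k := \|(I-P_{\sigma(k)})f_k\|^2.
\]
From $f - f_k = \sum_{j=k+1}^n (I-P_{\sigma(j)}) f_j$, the triangle inequality followed by Cauchy-Schwarz gives $\|f - f_k\|^2 \leq nA$ uniformly in $k$. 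The key transfer step uses that $I-P_{\sigma(k)}$ is a contraction:
\[
\|(I-P_{\sigma(k)}) f\| \leq \|(I-P_{\sigma(k)}) f_k\| + \|f - f_k\| \leq \sqrt{a_k} + \sqrt{nA}.
\]
Squaring via $(a+b)^2 \leq 2a^2 + 2b^2$ and summing yields $\sum_{k=1}^n \|(I-P_{\sigma(k)}) f\|^2 \leq 2A + 2n^2 A \leq 4n^2 A$; comparing with the Glauber Dirichlet lower bound $\geq n\gamma\|f\|^2$ gives $A \geq \gamma\|f\|^2/(4n)$, as desired.

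To turn this into mixing times, since $Q\mathbf{1} = \mathbf{1}$ the contraction passes to $L^2_0(\pi)$, and iteration yields $\|Q^t f\| \leq (1-\gamma/(4n))^{t/2}\|f\|$. Combined with the standard estimate $\|\delta_x/\pi - 1\|_{L^2(\pi)}^2 \leq 1/\pi_{\mathrm{min}}$ and Cauchy-Schwarz bounding $d_{\mathrm{TV}}$ by the $\chi^2$ norm, I obtain an $\varepsilon$-mixing time for $Q$ (counted in rounds) of $O\!\bigl(\tfrac{n}{\gamma}\log\tfrac{1}{\varepsilon\pi_{\mathrm{min}}}\bigr)$. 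Finally, since $P_{\mathsf{GD}}$ is positive semidefinite as a convex combination of projections, its absolute spectral gap equals $\gamma$, and the standard reversible comparison $1/\gamma = O(t^{\mathsf{GD}}_{\mathrm{mix}}(1/4))$ substitutes to give the stated bound. Tightness up to the logarithmic factor is witnessed by the Roberts and Rosenthal hardcore example already cited.

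The main obstacle is that $Q$ is a non-commuting product of projections, so direct spectral analysis does not apply and one cannot diagonalize $Q$ simultaneously with the $P_i$. The heart of the argument is the telescoping-plus-transfer maneuver above: it reduces the multi-step contraction to the one-step Glauber Dirichlet inequality at the cost of only a single $\sqrt{n}$ factor when bounding the drift $\|f - f_k\|$, which yields exactly the $n$ loss in the comparison. The same example from the introduction confirms that this linear-in-$n$ loss is sharp.
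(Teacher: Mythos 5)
Your proposal is correct and takes essentially the same route as the paper: a Pythagorean telescoping of the scan product together with a triangle-inequality/Cauchy--Schwarz transfer that charges the Glauber Dirichlet form to the per-step increments $a_k$, yielding a one-round scan gap of order $\gamma/n$, and then the standard spectral-gap-to-mixing conversions in both directions plus the Roberts--Rosenthal hardcore example for tightness. The only (cosmetic) difference is that the paper proves the more general statement charging each coordinate to its first-appearance index $k_j$ (so it also covers pseudorandom update sequences), while you specialize to the scan and use the uniform bound $\|f-f_k\|^2\le nA$.
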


Since the systematic scan consists of $n$ site updates, \Cref{thm:mixing} asserts that in terms of the number of updates, the mixing time of the systematic scan is no more than a factor $n^2$ worse than that of Glauber dynamics up to the logarithmic factor inherent to spectral analyses of mixing. In natural discrete systems, the minimum probability satisfies $\pi_{\min}= 2^{-\Theta(n)}$, in which case \Cref{thm:mixing} asserts that the mixing time is at most a factor of $n^3$ worse. To the best of our knowledge, this is the first bound showing that the ratio of mixing times is at most polynomial, even in settings where the mixing times are themselves superpolynomially large.

Importantly, \Cref{thm:mixing} is not improvable in the worst-case even in simple (anti-ferromagnetic) spin systems as witnessed by an example of Roberts and Rosenthal~\cite{roberts2015surprising}. For completeness, we provide a simple, self-contained proof of this example following their reasoning in \Cref{sec:optimality}. Thus, \Cref{thm:mixing} constitutes the sharpest possible upper bound on the mixing time of the systematic scan that can hold unconditionally for all systematic scans for Gibbs samplers in terms of the mixing time of Glauber dynamics, at least up to the unavoidable logarithmic factor.

We in fact show that our spectral gap bound underlying \Cref{thm:mixing} holds for almost every \emph{pseudorandom} sequence of sites to update. When translated to the Glauber dynamics, or most similar dynamics, our argument yields an efficient \emph{certification} procedure that a process has mixed, \emph{even conditioned on the sites that update}, that holds with $1-o(1)$ probability for random uniform sequences. Understanding which \emph{deterministic} sequences of site updates mix is of importance in applications beyond sampling, like in statistical applications where one does not have perfect knowledge of how sites are chosen to update. For instance, the Glauber dynamics are also considered in economics as ``noisy best-response dynamics'' for decentralized game dynamics \cite{kandori,BLUME1993387,young} as well as in statistical physics as a model for a particle system to converge towards thermal equilibrium. In these systems, one views the dynamics as arising exogenously and instead wishes to learn some macroscopic properties or statistics of typical samples from $\pi$. However, it may not be possible to know the precise (possibly stochastic) process that generates site updates; can the statistician nonetheless conclude that after some \emph{predictable} period of time, the observed configuration is close to the true stationary measure without knowledge of the precise randomness in the \emph{sequence} of observed site updates? This amounts to asserting that the configuration has conditional law close to $\pi$ even given the (independently chosen) sequence of updated sites.

Our argument (which we discuss in more detail in \Cref{sec:overview_tech}) for establishing \Cref{thm:mixing} and these applications turns out to be significantly more flexible than the analysis of Chlebicka,  Łatuszy\'nski, and Miasodejow~\cite{chlebicka} in that it can be more or less reversed up to polynomial factors. We further show that if \emph{any} systematic scan has a spectral gap, then Glauber dynamics has a polynomially related spectral gap, which therefore implies a mixing bound by standard arguments. This spectral gap comparison resolves Open Problem 3.7 of Chlebicka,  Łatuszy\'nski, and Miasodejow~\cite{chlebicka} with the following consequence:

\begin{theorem}
\label{thm:mixing_2}
         Let $\mathcal{X}=\mathcal{X}_1\times \ldots\times \mathcal{X}_n$ be a product of finite state spaces and let $\pi$ be any distribution on $\mathcal{X}$. Then for any $0<\varepsilon\leq 1$ and any permutation of $[n]$, $\sigma$,  
    \begin{equation*}
        t^{\mathsf{GD}}_{\mathrm{mix}}(\varepsilon)\leq \mathsf{poly}\left(n,t^{\mathsf{SS}(\sigma)}_{\mathrm{mix}}(1/10)\right)\log\left(\frac{1}{\varepsilon\pi_{\mathrm{min}}}\right).
    \end{equation*}
\end{theorem}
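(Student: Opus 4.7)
The plan is to bound the Glauber spectral gap $\gamma_{\mathsf{GD}}$ from below by a spectral quantity of the scan, then invoke Chatterjee's singular-value inequality to translate the mixing-time hypothesis on the scan into that bound. Write $P_\sigma := P_{\sigma(1)}\cdots P_{\sigma(n)}$, where each $P_i$ is the self-adjoint orthogonal projection in $L^2(\pi)$ onto functions independent of coordinate $i$; the Glauber operator $P_{\mathsf{GD}} = \tfrac{1}{n}\sum_i P_i$ is then a reversible, PSD operator with Dirichlet form $\mathcal{E}_{\mathsf{GD}}(f,f) = \tfrac{1}{n}\sum_i \|(I - P_i)f\|_\pi^2$. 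Starting from the telescoping identity
\[
I - P_\sigma \;=\; \sum_{k=1}^n P_{\sigma(1)}\cdots P_{\sigma(k-1)}\,(I - P_{\sigma(k)}),
\]
Cauchy--Schwarz with $\|P_i\|_{\mathrm{op}}\le 1$ gives, for any $f \in L^2(\pi)$,
\[
\|(I-P_\sigma)f\|_\pi^2 \;\le\; n\sum_{k=1}^n \|(I-P_{\sigma(k)})f\|_\pi^2 \;=\; n^2\, \mathcal{E}_{\mathsf{GD}}(f,f).
\]
Specializing to a Glauber eigenvector $f$ on the mean-zero subspace and lower-bounding the left-hand side by $\sigma_{\min}^2\!\left(I - P_\sigma|_{\perp \mathbf 1}\right) \|f\|_\pi^2$ yields the spectral comparison
\[
\gamma_{\mathsf{GD}} \;\ge\; \frac{\sigma_{\min}^2\!\left(I - P_\sigma|_{\perp \mathbf 1}\right)}{n^2}.
\]

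Next I would appeal to Chatterjee's spectral gap inequality on the singular values of the Laplacian~\cite{chatterjee2023spectral} to conclude, using the product-of-self-adjoint-projections structure of the scan (its ``special orthogonality''), that $\sigma_{\min}(I - P_\sigma|_{\perp \mathbf 1}) \ge 1/\mathsf{poly}(t_{\mathrm{mix}}^{\mathsf{SS}(\sigma)}(1/10))$. A slightly lossier but entirely elementary derivation uses the triangle inequality $\|P_\sigma^T f - f\|_\pi \le T\|(I-P_\sigma)f\|_\pi$ (valid since $\|P_\sigma\|_{\mathrm{op}} \le 1$): if $\|P_\sigma^T|_{\perp \mathbf 1}\|_{\mathrm{op}} \le 1/2$, then $\sigma_{\min}(I-P_\sigma|_{\perp \mathbf 1}) \ge 1/(2T)$, and the standard $L^\infty$-to-$L^2$ conversion shows $T = O\bigl(t_{\mathrm{mix}}^{\mathsf{SS}(\sigma)}(1/10)\log(1/\pi_{\mathrm{min}})\bigr)$ suffices. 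Substituting into the display above gives $\gamma_{\mathsf{GD}} \ge 1/\mathsf{poly}(n, t_{\mathrm{mix}}^{\mathsf{SS}(\sigma)}(1/10))$, and since $P_{\mathsf{GD}}$ is reversible with non-negative spectrum, the standard bound $t_{\mathrm{mix}}^{\mathsf{GD}}(\varepsilon) \le O(\log(1/(\varepsilon\pi_{\mathrm{min}}))/\gamma_{\mathsf{GD}})$ yields the claimed inequality.

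The conceptual crux is the telescoping Cauchy--Schwarz comparison, a natural linear-algebraic identity for products of orthogonal projections that may be viewed as a converse to the inequality underlying \Cref{thm:mixing}. The main analytic obstacle is the second step: for a generic non-reversible Markov chain, fast TV-mixing need \emph{not} imply a large smallest singular value of $I - P$ (the Jordan-block obstruction, whereby the operator norm can remain close to one even as iterates contract). The orthogonal-projection factorization of $P_\sigma$ is precisely what enables Chatterjee's inequality to supply the clean polynomial lower bound on $\sigma_{\min}(I - P_\sigma|_{\perp \mathbf 1})$ that the final polynomial dependence on $t_{\mathrm{mix}}^{\mathsf{SS}(\sigma)}(1/10)$ requires.
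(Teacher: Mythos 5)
Your proof is correct, but it takes a genuinely different and more direct route than the paper. The paper first converts the mixing hypothesis on the scan into an \emph{operator-norm} gap via Chatterjee's bound (\Cref{thm:nr_lb}) combined with \Cref{lem:gibbs_comp}, and then transfers that gap to Glauber through the supersequence comparison \Cref{thm:gen_gap_2} and a probabilistic expansion of $\bigl(\tfrac1n\sum_i \mathsf{P}_i\bigr)^L$ in \Cref{thm:converse}, ending with $\widetilde{O}\bigl(n^6 (t^{\mathsf{SS}(\sigma)}_{\mathrm{mix}}(1/10))^4\bigr)$ for a single scan order. You instead bypass the scan's operator norm entirely: the telescoping $I-\mathsf{P}_\sigma=\sum_k \mathsf{P}_{\sigma(1)}\cdots\mathsf{P}_{\sigma(k-1)}(I-\mathsf{P}_{\sigma(k)})$, with the contractive prefixes correctly placed to the \emph{left} of each $I-\mathsf{P}_{\sigma(k)}$, together with $\langle f,(I-\mathsf{P}_i)f\rangle_\pi=\|(I-\mathsf{P}_i)f\|_\pi^2$ (here is where the projection structure is actually used) gives $\widetilde{\gamma_{\mathsf{SS}(\sigma)}}\le n\sqrt{\gamma_{\mathsf{GD}}}$, i.e.\ $\gamma_{\mathsf{GD}}\ge \widetilde{\gamma_{\mathsf{SS}(\sigma)}}^{\,2}/n^2$; Chatterjee then gives $\widetilde{\gamma_{\mathsf{SS}(\sigma)}}\ge c/t^{\mathsf{SS}(\sigma)}_{\mathrm{mix}}(1/10)$ and \Cref{thm:markov_mixing} finishes, yielding $t^{\mathsf{GD}}_{\mathrm{mix}}(\varepsilon)=O\bigl(n^2 (t^{\mathsf{SS}(\sigma)}_{\mathrm{mix}}(1/10))^2\log(1/(\varepsilon\pi_{\min}))\bigr)$ from a \emph{single} scan order --- quantitatively sharper than the paper's corollary. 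Your argument is in spirit the proof of \Cref{lem:gibbs_comp} run against the Glauber Dirichlet form instead of the scan's own decrements; what the paper's heavier machinery buys is the standalone spectral-gap-to-spectral-gap statements (\Cref{thm:gen_gap_2}, \Cref{thm:converse}), though note your inequality combined with the easy bound $\gamma_{\mathsf{P}}\le\widetilde{\gamma_{\mathsf{P}}}$ also gives $\gamma_{\mathsf{GD}}\ge \gamma_{\mathsf{SS}(\sigma)}^2/n^2$. Two small corrections: Chatterjee's inequality needs no ``special orthogonality'' --- it holds for any irreducible chain, and in your route the projection structure enters only in the Dirichlet-form/telescoping step; and your ``elementary'' fallback via $\|\mathsf{P}_\sigma^Tf-f\|_\pi\le T\|(I-\mathsf{P}_\sigma)f\|_\pi$ picks up extra $\log(1/\pi_{\min})$ factors beyond the stated form of the theorem, so the Chatterjee route is the one that delivers the claim as written.
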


We provide the precise quantitative dependencies in \Cref{sec:proof}; the bound improves depending on the assumption made on whether all systematic scan orders $\sigma$ mix fast, or whether there is a single one that mixes fast. While these bounds are significantly weaker than our tight results for the converse direction, \Cref{thm:mixing} and \Cref{thm:mixing_2} together constitute an unconditional polynomial equivalence between the mixing times of the Glauber dynamics and any systematic scan. The mixing time comparison follows from the spectral gap comparison by a recent comparison of the mixing time of non-reversible chains in terms of the minimal nontrivial singular value of the Laplacian by Chatterjee~\cite{chatterjee2023spectral}. While this value is usually not related to the operator norm for general non-reversible Markov chains, the projection property of the Gibbs sampler will enable us to relate these quantities up to polynomial factors.\\

\subsection{Overview of Techniques}
\label{sec:overview_tech}
We now turn to giving a high-level overview of our main techniques towards establishing \Cref{thm:mixing} and \Cref{thm:mixing_2}. All precise definitions are given in \Cref{sec:preliminaries}.

\noindent \textbf{From Glauber Dynamics to  Scans.}
The proof of \Cref{thm:mixing} and the related applications for pseudorandom sequences is based on linear-algebraic arguments that bound the \emph{operator norms of appropriate matrices}. The connection between spectral gaps and mixing times is fairly standard, so we only briefly remark on this connection here and defer the standard background and proofs to \Cref{sec:preliminaries}. 

At a high level, it is well-known that the Glauber dynamics having relaxation time to stationarity at most $1/\delta$ is the same as asserting that for any function $f\in L^2(\mathcal{X},\pi)$ with $\text{Var}_{\pi}(f)=1$ and $\mathbb{E}_{\pi}[f]=0$,
\begin{equation*}
    \left\|\frac{1}{n}\sum_{i=1}^n \mathsf{P}_i f\right\|_{2,\pi}\leq 1-\delta,
\end{equation*}
for appropriate linear operators $\mathsf{P}_i$ that are self-adjoint with respect to the natural inner product associated to $ L^2(\mathcal{X},\pi)$. Here, the norm is the induced operator norm on this space. When acting on distributions on the left, each $\mathsf{P}_i$ corresponds to transitioning by rerandomizing in the $i$th site as in the Gibbs sampler. These operators act on functions $f\in L^2(\mathcal{X},\pi)$ on the right by taking the conditional expectation of $f$ after rerandomizing the $i$th site given the remaining coordinates, which is well-known to be an orthogonal projection in $L^2(\mathcal{X},\pi)$. 

Our first main technical result establishes a suitable operator norm bound on the product of transition matrices associated to any systematic scan. Concretely, we prove the following abstract statement:

\begin{theorem}
[\Cref{thm:gen_gap}, restated]
\label{thm:gap_intro}
        Let $\mathcal{H}$ be a Hilbert space and suppose $\mathsf{P}_1,\ldots, \mathsf{P}_n$ are orthogonal projections with respect to the inner product of $\mathcal{H}$ satisfying
        \begin{equation}
    \left\|\frac{1}{n}\sum_{i=1}^n \mathsf{P}_i\right\|_{\mathsf{op}}\leq 1-\delta.
\end{equation} 
Let $\mathsf{P}_{i_L}\ldots \mathsf{P}_{i_1}$ be any product of the orthogonal projection operators satisfying the following condition. For each $j\in [n]$, suppose there exists a minimal index $k_j\leq L$ satisfying $i_{k_j}=j$. We further assume these indices satisfy:
\begin{equation}
\label{eq:order}
        1=k_1\leq k_2\leq\ldots\leq k_n=L.
\end{equation}
Then, it holds that
\begin{equation*}
    \|\mathsf{P}_{i_L}\ldots \mathsf{P}_{i_1}\|_{\mathsf{op}}^2\leq 1-\frac{n\delta}{8\sum_{j=1}^n k_j}.
\end{equation*}
\end{theorem}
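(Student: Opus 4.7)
The plan is to run an energy-dissipation argument along the trajectory of the product of projections. Set $f_0 := f$ and $f_t := \mathsf{P}_{i_t} f_{t-1}$, so that, by the Pythagorean identity for orthogonal projections, $\|f_{t-1}\|^2 - \|f_t\|^2 = \|(I - \mathsf{P}_{i_t}) f_{t-1}\|^2 \geq 0$. The total \emph{energy drop} $D := \|f\|^2 - \|\mathsf{P}_{i_L}\cdots \mathsf{P}_{i_1} f\|^2$ therefore equals the telescoping sum $\sum_{t=1}^L \|(I - \mathsf{P}_{i_t}) f_{t-1}\|^2$, and the goal reduces to lower-bounding $D$ by a suitable multiple of $\|f\|^2$.

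The first step is to unpack the hypothesis: since each $\mathsf{P}_i$ is an orthogonal projection, $\langle g, \mathsf{P}_i g\rangle = \|\mathsf{P}_i g\|^2 = \|g\|^2 - \|(I - \mathsf{P}_i) g\|^2$, and so $\bigl\|\frac{1}{n}\sum_i \mathsf{P}_i\bigr\|_{\mathsf{op}} \leq 1 - \delta$ is equivalent to the \emph{Poincar\'e-type} inequality
\[
\sum_{i=1}^n \|(I - \mathsf{P}_i) g\|^2 \geq n\delta \|g\|^2 \quad \text{for every } g\in \mathcal{H}.
\]
Applied at $g = f$, this brings the desired $n\delta \|f\|^2$ into play on the left. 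The crux is to transfer each global summand $\|(I-\mathsf{P}_j)f\|^2$ into contributions that already appear in $D$. For each $j$, I would exploit the minimal index $k_j$ by writing
\[
(I - \mathsf{P}_j) f = (I - \mathsf{P}_j) f_{k_j - 1} \;+\; (I - \mathsf{P}_j)(f - f_{k_j - 1}),
\]
invoking $\|a+b\|^2 \leq 2\|a\|^2 + 2\|b\|^2$, and using that $I - \mathsf{P}_j$ is a contraction. The first piece is exactly $\|f_{k_j - 1}\|^2 - \|f_{k_j}\|^2$ because $i_{k_j} = j$, and since the $k_j$ are $n$ distinct time steps, these $n$ summands are disjoint contributions to $D$ and hence sum to at most $D$. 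For the second piece, $f - f_{k_j - 1} = \sum_{t=1}^{k_j - 1}(f_{t-1} - f_t)$, so Cauchy--Schwarz gives $\|f - f_{k_j - 1}\|^2 \leq (k_j - 1)\sum_{t=1}^{k_j - 1}\|f_{t-1} - f_t\|^2 \leq (k_j - 1)D$.

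Summing the resulting bound over $j$ and combining with the Poincar\'e-type inequality yields
\[
n\delta\,\|f\|^2 \;\leq\; 2D \;+\; 2D\sum_{j=1}^n (k_j - 1) \;\leq\; 2D\sum_{j=1}^n k_j,
\]
which rearranges to $D \geq \frac{n\delta}{2\sum_j k_j}\,\|f\|^2$, immediately implying the stated operator-norm bound (in fact with a constant better than the $\tfrac18$ claimed).

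The main obstacle is conceptual rather than technical: one must find the right bridge between the \emph{global} variance decomposition handed to us by the hypothesis and the \emph{trajectory-local} energy drops that make up $D$. The two-term split at the first-occurrence time $k_j$ does exactly this work, trading off a ``diagonal'' contribution (captured exactly by the energy drop at step $k_j$) against a ``drift'' contribution (handled by Cauchy--Schwarz over the preceding $k_j - 1$ steps). The ordering assumption $1 = k_1 \leq \cdots \leq k_n = L$ plays no active analytic role beyond serving as a convenient relabeling of coordinates, which suggests the same argument should generalize smoothly to more general update sequences, a flexibility the paper appears to exploit in its pseudorandom-scan extensions.
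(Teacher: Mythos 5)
Your proof is correct and follows essentially the same route as the paper's: both arguments charge $\|(I-\mathsf{P}_j)f\|^2$ at the first-occurrence time $k_j$ against the telescoping Pythagorean energy drops of the trajectory $f_t=\mathsf{P}_{i_t}f_{t-1}$, and combine this with the spectral gap of the average (your Poincar\'e-type reformulation is exactly what the paper extracts at the end via $\langle f,\mathsf{P}_jf-f\rangle=-\|\mathsf{P}_jf-f\|^2$). Your bookkeeping is slightly sharper—handling the step-$k_j$ drop exactly and weighting only the drift by $k_j-1$, rather than bounding $\|\mathsf{P}_jf-f\|^2\leq 4k_j\gamma_{k_j}$ as the paper does—which legitimately improves the constant from $8$ to $2$.
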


In words, \Cref{thm:gap_intro} asserts that if the arithmetic mean of orthogonal projection operators satisfies a spectral gap, then the spectral gap of any product of them can be bounded by an explicit quantity that depends only on the first appearances of each distinct projection; this result may be of independent interest. As we discuss in \Cref{sec:optimality}, this result is completely sharp up to the constant in two respects: first, it is necessary that the operators be orthogonal projections (or else the bound can even be \emph{exponentially large}!), and second, there are explicit examples of sequences of orthogonal projections~\cite{DBLP:journals/jmlr/RechtR12} even in $\mathbb{R}^2$ showing the conclusion is tight up to the precise constant.

Returning to the setting of the systematic scan, we have $k_j=j$ by definition, in which case the corresponding sum is $\Theta(n^2)$ (\Cref{cor:spectral_scan}). The same can shown with high probability for a sequence of indices drawn uniformly at random from $[n]$ (c.f. \Cref{thm:pseudo}), and hence a similar contraction holds for any sufficiently pseudorandom sequence. In both cases, the conclusion is that the corresponding spectral gap of the sequence of operators is at least $\Theta(\delta/n)$. By standard arguments relating mixing times and operator norms even for non-reversible Markov chains which we recall in \Cref{sec:preliminaries}, we obtain 
\Cref{thm:mixing}. Again, we establish for the first time that a simple example of Roberts and Rosenthal~\cite{roberts2015surprising} yields a tight example for the worst-case mixing comparison. 

To establish \Cref{thm:gap_intro}, we use the following reasoning. For each $\ell\geq 1$, we define the operators 
\begin{equation*}
    \mathsf{Q}_{\ell}:=\mathsf{P}_{i_{\ell}}\cdots \mathsf{P}_{i_1}.
\end{equation*}
Our goal is to give an operator norm bound on $\mathsf{Q}_L$ and we do so by precisely tracking the spectral gap of each of the intermediate operators $\mathsf{Q}_{\ell}$. The key observation is that for any vector $f\in \mathcal{H}$, we can write the \emph{orthogonal decomposition}
\begin{equation*}
        \mathsf{Q}_{{\ell}}f = \mathsf{P}_{i_{\ell+1}}( \mathsf{Q}_{{\ell}}f)+(\mathsf{Q}_{{\ell}}f-P_{i_{\ell+1}}\mathsf{Q}_{{\ell}} f)= \mathsf{Q}_{{\ell+1}}f+(\mathsf{Q}_{{\ell}}f-\mathsf{Q}_{{\ell+1}} f)
    \end{equation*}
We exploit this orthogonality with a precise charging argument to bound the norm of the difference $\mathsf{P}_jf-f$ in terms of the individual movements $\|\mathsf{Q}_{\ell}f-\mathsf{Q}_{\ell-1}f\|$ up to $\ell=k_j$. The intuition is that the spectral gap of Glauber, being an average of such differences, can be charged to the individual movements when we iteratively apply each projection operator. More precisely, if the first application of $\mathsf{P}_j$ to $f$ in the iterated projections does \emph{not} have a similar effect as it does for the average projection, then this must be because the preceding projection operators noticeably moved $f$. But in this case, this increases the spectral gap of the product, since any movement from an orthogonal projection decreases the norm! Making this win-win analysis completely precise follows essentially from the Pythagorean identity that \emph{squared norms} are exact when written for an orthogonal decomposition. Since the average of operators is assumed to have a spectral gap, we can then ensure that the $\mathsf{Q}_{\ell}$ operators also make progress in decreasing the norm of $f$. In light of \Cref{sec:optimality}, our reasoning and accounting in this charging argument is completely tight.

\noindent\textbf{From Scans to Glauber.} To resolve the polynomial equivalence of the two mixing times, we then turn to establishing \Cref{thm:mixing_2}.
A first important subtlety is that for non-reversible chains, the connection between mixing times and spectral gaps is not exact; a spectral gap \emph{upper bounds} the mixing time, but a lower bound typically only follows from a large norm \emph{eigenvalue}. In particular, the fact that a Markov chain mixes fast does not in itself imply a good bound on the spectral gap. As a result, it is not immediately clear how to employ similar reasoning to bound the mixing time of Glauber, since this is equivalent to a spectral gap, under just the assumption a scan is fast mixing. 

Our first simple contribution, that may be of independent interest, is noting that for the systematic scan, there \emph{is} such a lower bound. In particular, we exploit  a recent connection of Chatterjee~\cite{chatterjee2023spectral} that establishes lower bounds on mixing times in terms of spectral gaps of the \emph{Laplacian} of the scan. For a general Markov chain $\mathsf{P}$, the Laplacian is defined as $\mathsf{I}-\mathsf{P}$, and there is not usually a direct comparison between the singular values of the Laplacian and the original chain when nonreversible. However, we show in \Cref{lem:gibbs_comp} that there is such a comparison for the scan \emph{precisely} due to the fact each individual operator in the systematic scan is an orthogonal projection.

With this in order, we can then aim to establish a converse on bounding the mixing time of Glauber via one or all scans in terms of spectral gaps. The key technical claim we show is the following:

\begin{theorem}[\Cref{thm:gen_gap_2}, restated]
\label{thm:gap_intro_2}
        Let $\mathcal{H}$ be a Hilbert space and suppose $\mathsf{P}_1,\ldots, \mathsf{P}_n$ are orthogonal projections with respect to the inner product of $\mathcal{H}$. Suppose that
        \begin{equation}
    \left\|\mathsf{P}_n\ldots \mathsf{P}_1\right\|_{\mathsf{op}}\leq 1-\delta.
\end{equation} 
Let $\mathsf{P}_{i_L}\ldots \mathsf{P}_{i_1}$ be any product of these orthogonal projection operators satisfying the following condition. For each $j\in [n]$, suppose there exists a minimal index $k_j\leq L$ satisfying $i_{k_j}=j$. We further assume these indices satisfy:
\begin{equation}
\label{eq:order_2}
        1=k_1\leq k_2\leq\ldots\leq k_n=L.
\end{equation}
Then, it holds that
\begin{equation*}
    \|\mathsf{P}_{i_L}\ldots \mathsf{P}_{i_1}\|_{\mathsf{op}}\leq 1-\frac{\delta^2}{8(L-n+1)}.
\end{equation*}
\end{theorem}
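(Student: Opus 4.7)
The plan is to adapt the charging argument from Theorem~\ref{thm:gap_intro} to the present setting, where the hypothesis is on a single fixed scan $S:=\mathsf{P}_n\cdots\mathsf{P}_1$ rather than on the average operator. Fix any unit vector $f\in\mathcal{H}$ and set $\eta:=1-\|Q_L f\|^2$, where $Q_\ell:=\mathsf{P}_{i_\ell}\cdots\mathsf{P}_{i_1}$. Iterating the Pythagorean identity $\|Q_{\ell-1}f\|^2=\|Q_\ell f\|^2+\|(I-\mathsf{P}_{i_\ell})Q_{\ell-1}f\|^2$, valid because each $\mathsf{P}_{i_\ell}$ is an orthogonal projection, yields the decomposition $\eta=\sum_{\ell=1}^L d_\ell^2$ with $d_\ell:=\|(I-\mathsf{P}_{i_\ell})Q_{\ell-1}f\|$. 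Applying the same identity to $S$ and combining with the hypothesis gives $\delta\leq 1-(1-\delta)^2\leq\sum_{j=1}^n b_j^2$ where $b_j:=\|(I-\mathsf{P}_j)R_{j-1}f\|$ with $R_j:=\mathsf{P}_j\cdots\mathsf{P}_1$. The entire proof reduces to upper bounding $\sum_j b_j^2$ by a polynomial (in $n$ and $L-n+1$) multiple of $\eta$, thereby forcing $\eta$ to be polynomially large in $\delta$.

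The main idea is to compare the scan residual $R_{j-1}f$ (which enters $b_j$) with the intermediate vector $v_{j-1}:=Q_{k_j-1}f$, the state of $Q_L$ immediately before the first application of $\mathsf{P}_j$. Both vectors are obtained from $f$ by a product of projections drawn from $\{\mathsf{P}_1,\ldots,\mathsf{P}_{j-1}\}$, so the triangle inequality together with the nonexpansiveness of $I-\mathsf{P}_j$ gives $b_j\leq d_{k_j}+\tau_j$, where $\tau_j:=\|R_{j-1}f-v_{j-1}\|$ is a ``drift'' term. The technical heart is the inductive drift bound $\tau_j\leq\sum_{\ell\in E_j}d_\ell$, with $E_j:=\{1,\ldots,k_j-1\}\setminus\{k_1,\ldots,k_{j-1}\}$ of size $|E_j|=k_j-j\leq L-n$. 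This is proved by writing $v_{j-1}$ as the block of intermediate projections at positions $k_{j-1}+1,\ldots,k_j-1$ (all from $\{\mathsf{P}_1,\ldots,\mathsf{P}_{j-1}\}$) applied to $\mathsf{P}_{j-1}v_{j-2}$ and comparing with $\mathsf{P}_{j-1}R_{j-2}f$ via two triangle inequalities, which yields $\tau_j\leq\tau_{j-1}+\sum_{\ell=k_{j-1}+1}^{k_j-1}d_\ell$; iterating from the base case $\tau_1=0$ concludes.

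Applying Cauchy--Schwarz then gives $b_j^2\leq(|E_j|+1)\sum_{\ell\in E_j\cup\{k_j\}}d_\ell^2\leq(L-n+1)\sum_{\ell\in E_j\cup\{k_j\}}d_\ell^2$; summing over $j$ and swapping the order of summation (each $d_\ell^2$ is counted at most $O(n)$ times across the sets $E_j\cup\{k_j\}$) produces $\sum_j b_j^2\leq O(n(L-n+1))\eta$. Combined with $\sum_j b_j^2\geq\delta$, this yields $\eta\geq\Omega(\delta/(n(L-n+1)))$, and via $\|Q_L\|_{\mathsf{op}}\leq\sqrt{1-\eta}\leq 1-\eta/2$ this produces an operator norm bound of the claimed polynomial form. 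To exactly recover the stated constant $\delta^2/(8(L-n+1))$, one trades the factor of $n$ for an additional factor of $\delta$ via the pigeonhole inequality $\max_j b_j^2\geq\delta/n$ applied to the pointwise bound $b_{j^\ast}^2\leq 2(L-n+1)\eta$ for the coordinate-wise maximizer $j^\ast$.

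The main obstacle is establishing the drift bound $\tau_j\leq\sum_{\ell\in E_j}d_\ell$ without an exponential blowup in $n$. A naive triangle inequality of the form $\tau_j\leq\|R_{j-1}f-f\|+\|f-v_{j-1}\|$ combined with $\|R_{j-1}f-f\|\leq\sum_{k<j}b_k$ produces a recursion in which the scan residuals $b_k$---the very quantities we are ultimately trying to control---feed back into the bound on $\tau_j$, causing losses exponential in $n$ and a useless final inequality. The block decomposition of $Q_L$ between consecutive first-appearance positions is essential: it expresses $\tau_j$ purely in terms of the intermediate $d_\ell$'s for $\ell\in E_j$, entirely avoiding the unknown scan residuals $b_k$ and thereby preserving the polynomial form of the final bound.
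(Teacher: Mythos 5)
Your telescoping identities, the comparison $b_j\leq d_{k_j}+\tau_j$, and the block-wise drift recursion $\tau_j\leq \tau_{j-1}+\sum_{\ell=k_{j-1}+1}^{k_j-1}d_\ell$ are all correct, and in fact this recursion is exactly the recursion in the paper's proof of \Cref{thm:gen_gap_2}. The gap is in the endgame. Your main route sums the per-coordinate bounds $b_j^2\leq (L-n+1)\sum_{\ell\in E_j\cup\{k_j\}}d_\ell^2$ over $j$, and since each $d_\ell^2$ can be counted up to $n$ times this only gives $\delta\leq\sum_j b_j^2\leq n(L-n+1)\eta$, i.e.\ an operator norm bound of the form $1-\Omega\bigl(\delta/(n(L-n+1))\bigr)$. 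This does not imply the stated bound $1-\delta^2/(8(L-n+1))$ once $\delta\gtrsim 1/n$ (the two bounds are incomparable: yours is weaker by a factor of order $n\delta$ when $\delta$ is of constant order). The proposed patch does not repair this: pigeonhole gives $\max_j b_j^2\geq\delta/n$, and combining with the pointwise bound $b_{j^\ast}^2\leq 2(L-n+1)\eta$ yields again $\eta\geq\delta/(2n(L-n+1))$ --- the factor of $n$ is still there and no $\delta^2$ appears, so the claimed ``trade of $n$ for $\delta$'' is not substantiated by the inequality you cite.

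The fix --- and this is where the paper's proof diverges from yours --- is to never decompose the scan's gap into the residuals $b_j$ at all, but to use your drift lemma a single time at the top level $j=n$ against the full scan. Since $i_{k_n}=n$ and $k_n=L$, you have $\|Q_Lf-R_nf\|=\|\mathsf{P}_n(v_{n-1}-R_{n-1}f)\|\leq\tau_n\leq\sum_{\ell\in E_n}d_\ell\leq\sqrt{L-n}\,\sqrt{\eta}$ by Cauchy--Schwarz (using $|E_n|=L-n$ and $\sum_\ell d_\ell^2=\eta$), hence $\sqrt{1-\eta}=\|Q_Lf\|\leq\|R_nf\|+\sqrt{L-n}\,\sqrt{\eta}\leq 1-\delta+\sqrt{L-n}\,\sqrt{\eta}$. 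A two-case analysis finishes: if $\sqrt{L-n}\,\sqrt{\eta}\geq\delta/2$ then $\eta\geq\delta^2/(4(L-n))$; otherwise $1-\eta\leq\sqrt{1-\eta}\leq 1-\delta/2$, so $\eta\geq\delta/2\geq\delta^2/2$. Either way $\eta\geq\delta^2/(4(L-n+1))$, and $\|Q_Lf\|\leq 1-\eta/2$ gives the stated constant. The $n$-free form $\delta^2/(L-n+1)$ comes precisely from this single global comparison with the hypothesis $\|R_nf\|\leq 1-\delta$; any per-$j$ summation or maximization over $j$ reintroduces a factor of $n$. Your weaker bound would still yield polynomial consequences downstream, but it does not prove the theorem as stated.
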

In words, we show that if a fixed sequence of the orthogonal projections has a spectral gap, then the same will be true of \emph{any supersequence} where the spacing between the first appearances of the operators \emph{in order} is not too large. This statement and conclusions are qualitatively similar to \Cref{thm:gap_intro}, but loses in the square due to less favorable orthogonality arguments that are lossy. To prove \Cref{thm:gap_intro_2}, we employ a similar careful win-win analysis; either the first application of a projection $\mathsf{P}_j$ affects a given vector in similar ways, or the loss can be \emph{charged} to the additional projections in the supersequence. Balancing these contributions properly leads to the final bound.

To apply this result to bounding the spectral gap of Glauber, we use the following key observation: since the Glauber dynamics is reversible, the operator is self-adjoint, and hence 
\begin{equation*}
        \left\|\frac{1}{n}\sum_{i=1}^n \mathsf{P}_i\right\|^L =\left\|\left(\frac{1}{n}\sum_{i=1}^n \mathsf{P}_i\right)^L\right\|.
    \end{equation*}
    Therefore, it suffices to bound the spectral gap for a suitable power. The key idea is that taking $L\approx n\log(n)$ or $L\approx n^2\log(n)$, expanding the power leads to sums of products of the projections that with very high probability contain at least one scan or some \emph{fixed} scan ordering as an embedded subsequence. In particular, we may apply \Cref{thm:gap_intro_2} to assert that most of these terms have an explicit spectral gap, while we may trivially bound the rest of the terms by $1$. Since the probability of a random sequence of $L$ projections containing a scan decays much much faster than $L$, the loss in the denominator of \Cref{thm:gap_intro_2} is much less than the error probability of not containing a good scan. Setting parameters precisely leads to the polynomial equivalence of \Cref{thm:mixing_2}, thus resolving the open problem of Chlebicka, Łatuszy\'nski, and Miasodejow~\cite{chlebicka}.

\noindent\textbf{Organization.} In \Cref{sec:related}, we discuss related bounds and comparisons in the literature between Glauber dynamics and the systematic scan; to our knowledge, our spectral gap comparison subsumes all such existing bounds and cannot be improved in light of known lower bounds. In \Cref{sec:preliminaries}, we include for completeness standard results on the relation between the spectral gap and mixing times, including for non-reversible chains, for our main application. In \Cref{sec:proof}, we prove the requisite linear-algebraic facts that yield our mixing time comparisons. Finally, we show in \Cref{sec:optimality} that \Cref{thm:mixing} and \Cref{thm:gap_intro} cannot be significantly improved via simple, existing examples.

\subsection{Related Work}
\label{sec:related}
As described above, a general equivalence between the systematic scan and Glauber dynamics has been conjectured to hold in certain settings~\cite{diaconis,levin2017markov}, but the relationship between the two has remained unclear. While the systematic scan is often difficult to study due to the unfavorable analytical properties above, some results of this type have been established in certain models. Among others, such results have been carefully studied in the special cases of sampling from multivariate Gaussians~\cite{amit}, colorings~\cite{dyer_colors}, groups~\cite{diaconis_ram}, and monotone spin systems under spectral independence~\cite{DBLP:conf/approx/BlancaZ23}. It has also been long known that under Dobrushin-type conditions, both Glauber dynamics and the systematic scan mix rapidly~\cite{simon,DBLP:conf/approx/DyerGJ06}. We remark that the work of Dyer, Goldberg, and Jerrum~\cite{DBLP:conf/approx/DyerGJ06} shows that if one defines Glauber and the systematic scan as the average or product of more general Markov transitions at each site (i.e. not conditional re-randomizations), then the systematic scan need not even be ergodic even if Glauber rapidly mixes. Therefore, the specific property of Gibbs samplers as orthogonal projections is essential for any spectral or mixing comparison. See also Fishman~\cite{fishman} for an early study of the relative performance of different scan orders.

To our knowledge, our work is the first to both establish a \emph{tight} comparison of the two samplers in one direction as well as a polynomial equivalence of the two samplers in general. The main result of Chlebicka,  Łatuszy\'nski, and Miasodejow~\cite{chlebicka} yields the following suboptimal guarantee: if the Glauber dynamics has spectral gap of order $\gamma$, then a full round of any systematic scan has spectral gap at least $\gamma^2/n^2$, which is quadratically worse than our optimal bound in both components. Consequently, their result cannot establish the (weaker) claim that the mixing time of systematic scans are  at most $\mathsf{poly}(n)$ factors worse of Glauber dynamics in low-temperature systems with superpolynomial mixing times, nor ever prove a mixing time bound better than $\Theta(n^5)$ steps for the systematic scan in any nontrivial system where $\gamma\lesssim 1/n$. At the technical level, their work employs  machinery of Badea, Grivaux, and M\"uller~\cite{badea} on the convergence of alternating projections. Our approach is conceptually cleaner and more direct, yielding the optimal spectral gap. We also provide a qualitative converse that establishes a polynomial relationship in the reverse direction, answering Open Question 3.7 of their work using variations of this technique, emphasizing the applicability of our methods. By contrast, their results (as proven in earlier work of~\cite{roberts_rosenthal}) only establish an exponentially small spectral gap for the converse exploiting the trivial fact that expanding $n$ steps of Glauber includes a single systematic scan (out of all $n^n$ choices of sites) with an assumed spectral gap. We use our more flexible and direct framework for analyzing spectral gaps to establish the first polynomial relationship in this direction.

The earlier work of He, de Sa, Mitliagkas, and R\'e~\cite{DBLP:conf/nips/HeSMR16} provides a litany of examples showing no asymptotically general ordering of scan dynamics can hold, and then provide a comparison result using conductances. However, this argument requires them to compare a \emph{lazy, augmented} systematic scan with the Glauber dynamics, where the augmented state space is enlarged to incorporate the index of the site to be updated at the current state. Their result shows that the mixing time of Glauber can be bounded by the essentially the square of the mixing time of a lazy, augmented scan.  On the other hand, their mixing time bound for the lazy, augmented scan only yields an upper bound in terms of Glauber that is quantitatively suboptimal in terms of additional $\mathsf{poly}(n,\tau_{\mathsf{mix}})$ factors and further scales inversely with the minimum holding probability of any transition, which can be exponentially small even in rapidly mixing models like high-temperature spin glasses.

The special case where $n=2$ can be viewed as alternating projections and admits considerably sharper guarantees. As demonstrated by Roberts and Rosenthal~\cite{roberts2015surprising}, several phenomena that arise are particular to this case and cannot extend to larger $n$, so the techniques and results do not appear to have much bearing for the general problem we consider. At the linear-algebraic level, a version of the matrix AM-GM inequality is known to hold~\cite{BHATIA2000203} when $n=2$; this implies that the spectral gap of the scan is at least as large as that of Glauber. This is intuitively clear for the Gibbs sampler, since there is no benefit from applying a site transition twice in a row. Several more precise results for this special case have been established in the literature, see e.g. \cite{diaconis2010stochastic,andrieu,DBLP:conf/aistats/0001KZ18,qin} for more discussion.

Understanding the mixing properties of the systematic scan has also been the subject of reason work in the literature on mixing in high-dimensional expanders. Recent work of Alev and Parzanchevski~\cite{alev_1} shows that in extremely good high-dimensional spectral expanders, the spectral gap of the systematic scan can be arbitrarily close to 1 while the spectral gap of $n$ steps of Glauber will remain bounded above by a small constant. This result thus formalizes a sense in which a distribution that is close enough to a product will have a more favorable spectral gap from the scan than from Glauber (i.e. the ``down-up walk" in this literature). A follow-up work by Alev and Rao~\cite{alev_rao} considers ``expanderized up-down walks,'' a higher-dimensional analogue of the ``derandomized square''~\cite{DBLP:conf/approx/RozenmanV05} from the pseudorandomness literature that replaces the full randomness of the site updates of Glauber (which corresponds to the complete graph on $[n]$) with a random walk on a sparse base graph on $[n]$. Thus, one can still obtain sharp mixing times using only $O(1)$ random bits to choose sites at each time, rather than requiring $O(\log n)$ bits.  However, their results do not appear to give any bound on the actual systematic scan since this corresponds to using the directed cycle as the base graph, which has unfavorable spectral properties. 

We remark that from the perspective of derandomizing Glauber dynamics, one may also appeal to seminal pseudorandomness results of Nisan and Zuckerman~\cite{DBLP:journals/jcss/NisanZ96} to randomness-efficiently implement Glauber dynamics on a system with polynomial spectral gap. The key point is that Glauber dynamics can typically be implemented in $O(n)$ space, so their results imply a pseudorandom generator with seed length $O(n)$ that can be used for the implementation of the $\mathsf{poly}(n)$ steps of Glauber with small statistical error. See also the recent work of Feng, Guo, Wang, Wang, and Yin~\cite{derandomizingmcmc} for a complete derandomization of MCMC in certain combinatorial applications.

\section{Preliminaries}
\label{sec:preliminaries}
In this section, we recall standard definitions and results in the theory of Markov chains, Gibbs sampling, and mixing times of finite Markov chains. For a vector $\bm{x}\in \mathcal{X}^n$, we write $\bm{x}_{-i}$ for the restriction to the coordinates outside $i$. We write $\tilde{O}(f(n))$ to denote an upper bound of the form $Cf(n)\log^C(f(n))$ for some universal constant $C>0$. We also write $\mathsf{Bin}(n,p)$ to denote a Binomial random variable with $n$ trials with $p$ success probability and $\mathsf{Geom}(\lambda)$ to denote a Geometric random variable with success probability $\lambda$ (and thus mean $1/\lambda$).
\subsection{Markov Chains, Gibbs Samplers and Linear Algebra}
In this subsection, we provide the standard definitions of  Markov chains and Gibbs samplers. Our main mixing applications will be phrased for finite state spaces in which case these objects reduce to more familiar matrix formulations.
\begin{definition}
    Let $(\mathcal{X},\mathcal{F})$ be a measurable space. A \textbf{Markov kernel} $\mathsf{P}:\mathcal{X}\times \mathcal{F}\to \mathbb{R}$ is a map satisfying
    \begin{enumerate}
        \item For each $x\in \mathcal{X}$, the restriction $F\mapsto \mathsf{P}(x,F)$ forms a probability distribution, and
        \item For each $F\in \mathcal{F}$, the restriction $x\mapsto \mathsf{P}(x,F)$ is a measurable function.
    \end{enumerate}

    We then define the discrete-time Markov chain $X_0,X_1,\ldots$ as follows: let $\mu$ be a probability distribution on $\mathcal{X}$. Then for $t=0$, $X_0\sim \mu$, while $X_t$ has marginal law $\mu \mathsf{P}^t$ and conditional law that recursively satisfies the \textbf{Markov property}:
    \begin{equation*}
        \Pr(X_t\in F\vert X_0,\ldots,X_{t-1})=\mathsf{P}(X_{t-1},F).
    \end{equation*}

    We say that a probability distribution $\pi$ is \textbf{stationary} with respect to $\mathsf{P}$ if $\pi \mathsf{P}=\pi$. The Markov kernel is \textbf{reversible} if $\pi(dx)\mathsf{P}(x,d\pi(y))=\pi(dy)\mathsf{P}(y,d\pi(x))$ for all $x,y\in \mathcal{X}$.
\end{definition}

We recall that for any Markov kernel $\mathsf{P}$, there is a natural dual action on functions. To state this, we define the set
\begin{equation*}
    L^2(\mathcal{X},\pi) = \left\{f:\mathcal{X}\to \mathbb{R}: \int_{\mathcal{X}} \vert f(x)\vert^2d\pi(x)<\infty\right\}.
\end{equation*}
We may then define an inner product and norm with respect to $\pi$ on $L^2(\mathcal{X},\pi)$ via
\begin{gather*}
    \langle f,g\rangle_{\pi} = \int_{\mathcal{X}} f(x)g(x)d\pi(x)\\
    \|f\|_{\pi} = \sqrt{\langle f,f\rangle_{\pi}} = \sqrt{\int_{\mathcal{X}} \vert f(x)\vert^2d\pi(x)}.
\end{gather*}
When there is no ambiguity, we may drop the subscript. Then the transition operator $\mathsf{P}$ naturally acts on functions $f\in L^2(\mathcal{X},\pi)$ via
\begin{equation*}
    \mathsf{P}f(x) = \int_{\mathcal{X}} f(y)\mathsf{P}(x,d\pi(y)) = \mathbb{E}[f(X^1)\vert X^0 = x],
\end{equation*}
where $X^1$ is obtained by applying the Markov kernel to $X^0=x$.
It is well-known that $\mathsf{P}$ is a bounded linear operator that is a contraction in $L^2$. Moreover, it is also well-known that reversibility of $\mathsf{P}$ is equivalent to the operator being self-adjoint i.e. by Fubini's theorem,
\begin{align*}
    \langle f,\mathsf{P}g\rangle &= \int_{\mathcal{X}} f(x)\left(\int_{\mathcal{X}} g(y)\mathsf{P}(x,d\pi(y))\right)d\pi(x)\\
    &=\int_{\mathcal{X}}\int_{\mathcal{X}} f(x)g(y) \mathsf{P}(x,d\pi(y))d\pi(x)\\
    &=  \int_{\mathcal{X}} \left(\int_{\mathcal{X}} f(x)\mathsf{P}(y,d\pi(x))\right) g(y)d\pi(y)\\
    &=\int_{\mathcal{X}} \mathsf{P}f(y) g(y)d\pi(y)\\
    &=\langle \mathsf{P}f,g\rangle.
\end{align*}
Since this holds for all $f,g\in L^2(\mathcal{X},\pi)$, it is possible to show this an equivalence.

We now specialize to the setting of the Gibbs samplers. For each $i\in [n]$, suppose $(\mathcal{X}_i,\mathcal{F}_i)$ is a measure space with some base measure $\mu_i$. Then let $(\mathcal{X},\mathcal{F})=(\mathcal{X}_1\times\ldots\times \mathcal{X}_n,\otimes_{i=1}^n \mathcal{F}_i)$ denote the product space and let $\mu$ be the natural product measure, and suppose $\pi$ is a probability measure on $(\mathcal{X},\mathcal{F})$ that is absolutely continuous with respect to $\mu$. Then for each $i\in [n]$, any set $A\in \mathcal{F}$, and any element $\bm{x}\in \mathcal{X}$, let
\begin{equation*}
    A_{i,\bm{x}} = \{y\in \mathcal{X}_i: (x_1,\ldots,x_{i-1},y,x_{i+1},\ldots,x_n)\in A\},
\end{equation*}
We then define the Markov kernel $\mathsf{P}_i$ via
\begin{equation*}
    \mathsf{P}_i(\bm{x},A) = \frac{\int_{A_{i,\bm{x}}}\frac{d\pi(x_1,\ldots,x_{i-1},y,x_{i+1},\ldots,x_n)}{d\mu}d\mu_i(y)}{\int_{\mathcal{X}_i}\frac{d\pi(x_1,\ldots,x_{i-1},y,x_{i+1},\ldots,x_n)}{d\mu}d\mu_i(y)}.
\end{equation*}
In words, $\mathsf{P}_i$ rerandomizes the value of the $i$th site according to the conditional distribution of $\pi$ that fixes all other coordinates.
Equivalently, $\mathsf{P}_i$ acts on functions via
\begin{equation}
\label{eq:rerand}
    \mathsf{P}_if(\bm{x}) = \mathbb{E}_{X\sim \pi}[f(X)\vert X_{-i} = \bm{x}_{-i}],
\end{equation}
where we write $-S$ to denote the restriction to the coordinates in $[n]\setminus S$.
It is well-known that $\mathsf{P}_i$ is self-adjoint and satisfies $\mathsf{P}_i^2 = \mathsf{P}_i$. The former is particularly easy to see in the case each $\mathcal{X}_i$ is finite, in which case one can take $\mu_i$ to be uniform and so if $\bm{y}=(x_1,\ldots,x_{i-1},y,x_{i+1},\ldots,x_n)$,
\begin{equation*}
    \pi(\bm{x})\mathsf{P}_i(\bm{x},\bm{y})=\frac{\pi(\bm{x})\pi(\bm{y})}{\sum_{y_i\in \mathcal{X}_i} \pi(x_1,\ldots,x_{i-1},y_i,x_{i+1},\ldots,x_n)}=\mathsf{P}_i(\bm{y},\bm{x})\pi(\bm{y}).
\end{equation*} Therefore, $\mathsf{P}_i$ can be viewed as the orthogonal projection onto the subspace of functions in $L^2(\mathcal{X},\pi)$ that do not depend on $x_i$.

We may now define the relevant operators that will be the subject of this work. For the rest of the paper, we assume that each $\mathsf{P}_i$ is given by \Cref{eq:rerand}.
\begin{definition}
    Let $(\mathcal{X},\mathcal{F})=(\mathcal{X}_1\times\ldots\times \mathcal{X}_n,\otimes_{i=1}^n \mathcal{F}_i)$ and let $\pi$ be some distribution satisfying the above relations so that each $\mathsf{P}_i$ is well-defined. Then the Markov kernel of \textbf{Glauber dynamics} is defined as
    \begin{equation*}
        \mathsf{P}_{\mathsf{GD}} = \frac{1}{n}\sum_{i=1}^n \mathsf{P}_i.
    \end{equation*}
    The \textbf{systematic scan with respect to an order $\sigma:[n]\to [n]$} is defined via
    \begin{equation*}
        \mathsf{P}_{\sigma} = \mathsf{P}_{\sigma(n)}\ldots \mathsf{P}_{\sigma(1)}.
    \end{equation*}
\end{definition}
In words, the Markov kernel associated with Glauber dynamics rerandomizes a uniformly random site, while the systematic scan updates them in the order $\sigma(n),\ldots,\sigma(1)$; note that we write them in reversed order so as to make the application to functions more notationally clear. Note that the Glauber dynamics is self-adjoint, while the systematic scan is a product of orthogonal projections that is generally not itself self-adjoint.

\subsection{Mixing Times}

We now recall several definitions and results from the theory of mixing of Markov chains on finite state spaces. We refer to Levin and Peres~\cite{levin2017markov} or Montenegro and Tetali~\cite{tetali} for comprehensive treatments of the topic. 

\begin{definition}
    Let $\mu,\nu$ be two distributions on a discrete state space $\mathcal{X}$. Then the \textbf{total variation distance} between $\mu$ and $\nu$ is defined as
    \begin{equation*}
        d_{\mathsf{TV}}(\mu,\nu)=\frac{1}{2}\sum_{x\in \mathcal{X}} \vert \mu(x)-\nu(x)\vert.
    \end{equation*}
\end{definition}

\begin{definition}
    For an irreducible and aperiodic Markov chain $\mathsf{P}$ on a finite state space $\mathcal{X}$ with stationary distribution $\pi$, the \textbf{distance to stationarity after $t$} steps is
    \begin{equation*}
        d_{\mathsf{P}}(t):=\max_{x\in \mathcal{X}} d_{\mathsf{TV}}(\mathsf{P}^t(x,\cdot),\pi).
    \end{equation*}

    For any $\varepsilon\in [0,1]$, the \textbf{$\varepsilon$-mixing time} of $\mathsf{P}$ is
    \begin{equation*}
        t_{\mathsf{mix}}^{\mathsf{P}}(\varepsilon)=\min\{t: d_{\mathsf{P}}(t)\leq \varepsilon\}.
    \end{equation*}
\end{definition}

\begin{definition}
    Let $\mathcal{X}$ be a state space and let $\mathsf{P}$ be the Markov kernel of an irreducible Markov chain on $\mathcal{X}$ with stationary measure $\pi$. Then the \textbf{$\pi$-operator norm} of $\mathsf{P}$ is defined as 
    \begin{equation*}
        \|\mathsf{P}\|_{\pi,\mathsf{op}} = \max_{f:\mathbb{E}_{\pi}[f]=0} \frac{\|\mathsf{P}f\|_{\mathsf{\pi}}}{\|f\|_{\pi}}.
    \end{equation*}
    Then the \textbf{spectral gap} of $\mathsf{P}$ is defined by
    \begin{equation*}
        \gamma_{\mathsf{P}} = 1-\|\mathsf{P}\|_{\pi,\mathsf{op}}\geq 0.
    \end{equation*}
We also define the related quantity
    \begin{equation*}
\widetilde{\gamma_{\mathsf{P}}}=\sigma_2(I-\mathsf{P})=\min_{f:\mathbb{E}_{\pi}[f]=0} \frac{\|(I-\mathsf{P})f\|_{\pi}}{\|f\|_{\pi}},
    \end{equation*}
    which is the second smallest singular value of the Laplacian of $\mathsf{P}$ with respect to $\pi$.
\end{definition}

We now have the following results bounding the mixing time of Markov chains:

\begin{theorem}[\cite{levin2017markov}, Theorem 12.4 and 12.5, \cite{tetali}, Proposition 1.14]
\label{thm:markov_mixing}
    Let $\mathcal{X}$ be a finite state space and let $\mathsf{P}$ be the transition matrix of an irreducible Markov chain on $\mathcal{X}$ with stationary measure $\pi$. Then:
    \begin{enumerate}
        \item For any irreducible $\mathsf{P}$,
        \begin{equation*}
             t_{\mathsf{mix}}^{\mathsf{P}}(\varepsilon)\leq \frac{1}{\gamma_{\mathsf{P}}}\log\left(\frac{1}{\varepsilon\pi_{\min}}\right),
        \end{equation*}
        where $\pi_{\min}=\min_{\bm{x}\in \mathcal{X}:\pi(\bm{x})\neq 0} \pi(\bm{x})$.
        \item If $\mathsf{P}$ is reversible with respect to $\pi$, then
        \begin{equation*}
            \left(\frac{1}{\gamma_{\mathsf{P}}}-1\right)\log\left(\frac{1}{2\varepsilon}\right)\leq t_{\mathsf{mix}}^{\mathsf{P}}(\varepsilon).
        \end{equation*}
    \end{enumerate}
\end{theorem}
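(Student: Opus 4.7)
My plan is to prove the two bounds separately: part 1 via an $L^2$ contraction argument that does not require reversibility, and part 2 via testing against a norm-attaining eigenfunction, which does. In both cases, Cauchy--Schwarz converts a TV distance computation into an $L^2$ estimate, after which the operator norm identity $\|\mathsf{P}\|_{\pi, \mathsf{op}} = 1 - \gamma_{\mathsf{P}}$ (and its counterpart for the adjoint) does the work.

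For the upper bound, I would introduce $\phi_x(z) := \mathbf{1}[z = x]/\pi(x)$, which satisfies $\mathbb{E}_\pi \phi_x = 1$ and $\|\phi_x - 1\|_\pi^2 = 1/\pi(x) - 1 \leq 1/\pi_{\min}$. A direct computation using the adjoint kernel $\mathsf{P}^*(z, w) = \pi(w)\mathsf{P}(w, z)/\pi(z)$ gives the key identity $(\mathsf{P}^{*t}(\phi_x - 1))(y) = \mathsf{P}^t(x, y)/\pi(y) - 1$. Pulling the $\pi$-weighting out of the TV sum yields
\[
2 d_{\mathsf{TV}}(\mathsf{P}^t(x, \cdot), \pi) = \sum_y \pi(y) \, |\mathsf{P}^t(x, y)/\pi(y) - 1| = \mathbb{E}_\pi |\mathsf{P}^{*t}(\phi_x - 1)|.
\]
Cauchy--Schwarz upper bounds this by $\|\mathsf{P}^{*t}(\phi_x - 1)\|_\pi$, and since $\|\mathsf{P}^*\|_{\pi, \mathsf{op}} = \|\mathsf{P}\|_{\pi, \mathsf{op}} = 1 - \gamma_{\mathsf{P}}$ and $\phi_x - 1$ is $\pi$-mean-zero, the norm contracts by $(1 - \gamma_{\mathsf{P}})^t$. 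Inverting the requirement $d_{\mathsf{TV}} \leq \varepsilon$ and using the elementary inequality $\log(1/(1-\gamma)) \geq \gamma$ then produces the stated mixing time bound.

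For the lower bound, I would use that, by reversibility, $\mathsf{P}$ is self-adjoint on $L^2(\mathcal{X}, \pi)$, so the spectral theorem supplies a real eigenvalue $\lambda$ with $|\lambda| = 1 - \gamma_{\mathsf{P}}$ and a corresponding mean-zero eigenfunction $f$; normalize so $\|f\|_\infty = 1$ and pick $x^*$ attaining this sup. Using $\mathbb{E}_\pi f = 0$ to insert a constant under the sum,
\[
|\lambda|^t = |\mathsf{P}^t f(x^*)| = \left| \sum_y (\mathsf{P}^t(x^*, y) - \pi(y)) f(y) \right| \leq 2\, d_{\mathsf{TV}}(\mathsf{P}^t(x^*, \cdot), \pi).
\]
If $d_{\mathsf{P}}(t) \leq \varepsilon$, then $|\lambda|^t \leq 2\varepsilon$, and applying $\log(1/(1-\gamma)) \leq \gamma/(1-\gamma)$ rearranges this into $t \geq (1/\gamma_{\mathsf{P}} - 1) \log(1/(2\varepsilon))$.

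The conceptual obstacle, and the reason the two bounds differ in hypothesis, is the role of reversibility in producing an eigenfunction that saturates the operator norm. In part 1 we only use that $\mathsf{P}^*$ has the same operator norm as $\mathsf{P}$, a purely Hilbert-space fact that holds for any bounded operator. In part 2 we genuinely need a real, mean-zero eigenfunction at the extremal eigenvalue, which self-adjointness guarantees; in the non-reversible setting the operator norm can strictly exceed the spectral radius, and this test-function approach breaks down, which is precisely why a lower bound matching the upper bound cannot hold in general. Beyond this dichotomy, both parts are short consequences of Cauchy--Schwarz together with the elementary inequalities $\gamma \leq \log(1/(1-\gamma)) \leq \gamma/(1-\gamma)$.
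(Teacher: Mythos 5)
Your argument is correct and is essentially the standard one: the paper does not prove \Cref{thm:markov_mixing} but cites it to Levin--Peres (Theorems 12.4, 12.5) and Montenegro--Tetali (Proposition 1.14), and your two steps --- the $L^2$ contraction of the chi-square-type density $\phi_x-1$ under the adjoint kernel for the upper bound, and testing a norm-attaining mean-zero eigenfunction against total variation for the reversible lower bound --- are exactly the proofs given there. The only cosmetic caveat is the usual integer-rounding of the mixing time (the cited statement carries a ceiling), which your slack from $2\sqrt{\pi_{\min}}\geq\pi_{\min}$ absorbs.
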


For reversible Markov chains, the mixing times and operator norm are closely related to the eigenvalues, which all exist and are real by virtue of the spectral theorem. One can provide an analogous lower bound on the mixing time for non-reversible chains in terms of the spectrum, but this is analytically quite difficult since they lie in the unit complex disc. However, we have the recent lower bound of Chatterjee~\cite{chatterjee2023spectral} that will prove useful in providing a full comparison for our setting:

\begin{theorem}[\cite{chatterjee2023spectral}, Theorem 1.4]
\label{thm:nr_lb}
    There is a constant $c>0$ such that the following holds. Let $\mathcal{X}$ be a finite state space and let $\mathsf{P}$ be the transition matrix of an irreducible Markov chain on $\mathcal{X}$ with stationary measure $\pi$.\footnote{Note that Chatterjee defines the inner product structure associate with $\pi$ using complex-valued functions with the conjugate symmetric inner product. However, it is easy to see that for real matrices (like the Laplacian), the second smallest singular value does not depend on whether one considers them over real or complex-valued functions on $\mathcal{X}$ since both the real and complex parts are acted upon separately by linearity, both must have mean zero by the variational formula for singular values, and $\|\cdot\|_{\pi}^2$ is additive across real and complex parts in this case.} Then 
    \begin{equation*}
        t_{\mathsf{mix}}^{\mathsf{P}}(1/10)\geq \frac{c}{\widetilde{\gamma_{\mathsf{P}}}} 
    \end{equation*}
\end{theorem}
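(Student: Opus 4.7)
The plan is to leverage the singular-value assumption to construct a slowly changing observable $f$ for the chain and then convert this slow $L^2(\pi)$-decay into a total-variation lower bound. First, by the variational formula $\widetilde{\gamma_{\mathsf{P}}} = \min\{\|(I-\mathsf{P})f\|_\pi / \|f\|_\pi : \mathbb{E}_\pi[f] = 0, f \neq 0\}$, I take $f$ attaining (or nearly attaining) the minimum, normalized so $\|f\|_\pi = 1$; thus $\mathbb{E}_\pi[f] = 0$ and $\|(I-\mathsf{P})f\|_\pi \leq \widetilde{\gamma_{\mathsf{P}}}$. Then, using the telescoping identity $\mathsf{P}^t f - f = -\sum_{s=0}^{t-1}\mathsf{P}^s(I-\mathsf{P})f$ together with the standard fact that every Markov kernel is an $L^2(\pi)$-contraction, I deduce $\|\mathsf{P}^t f - f\|_\pi \leq t \widetilde{\gamma_{\mathsf{P}}}$, and in particular $\|\mathsf{P}^t f\|_\pi \geq 1 - t\widetilde{\gamma_{\mathsf{P}}}$, which is bounded below by an absolute constant for $t \leq c/\widetilde{\gamma_{\mathsf{P}}}$. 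So $f$ is indeed a witness of slow $L^2$-mixing for $t$ in this range.

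The second and much more delicate step is to convert this $L^2(\pi)$ lower bound on $\mathsf{P}^t f$ into a total-variation lower bound that is \emph{independent of $\pi_{\min}$}. The naive route picks $x^*$ maximizing $|\mathsf{P}^t f(x^*)|$ and invokes the duality $|\mathsf{P}^t f(x^*)| = |\mathbb{E}_{\mathsf{P}^t(x^*,\cdot)}[f] - \mathbb{E}_\pi[f]| \leq 2\|f\|_\infty \cdot d_{\mathsf{TV}}(\mathsf{P}^t(x^*,\cdot), \pi)$, but this loses a factor $\|f\|_\infty/\|f\|_\pi$, which can be as large as $1/\sqrt{\pi_{\min}}$ for a generic singular vector. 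To bypass this loss, I would attempt to extract a \emph{bounded} near-minimizer of the singular-value problem from $f$ via a level-set decomposition: writing $f$ as a signed integral of indicators $\mathbf{1}_{\{f > s\}}$ and $\mathbf{1}_{\{f < -s\}}$, a pigeonhole/averaging argument would produce a threshold $s^*$ such that the indicator-like function $g = \mathbf{1}_{\{f > s^*\}} - \pi(\{f > s^*\})$ is mean-zero, bounded by $1$, and satisfies $\|(I-\mathsf{P})g\|_\pi / \|g\|_\pi \lesssim \widetilde{\gamma_{\mathsf{P}}}$. The naive duality argument applied to $g$ would then directly yield $d_{\mathsf{P}}(t) \geq \Omega(1)$ for $t \leq c/\widetilde{\gamma_{\mathsf{P}}}$.

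The main obstacle is precisely this last step: extracting such a bounded near-minimizer with a controlled ratio is the technical heart of the argument. The co-area-type decomposition naturally yields only the upper bound $\|(I-\mathsf{P})f\|_\pi \leq \int_0^\infty \|(I-\mathsf{P})\mathbf{1}_{\{f > s\}}\|_\pi \, ds + \int_0^\infty \|(I-\mathsf{P})\mathbf{1}_{\{f < -s\}}\|_\pi \, ds$, which on average produces some level set with small non-stationarity, but one must also jointly ensure that the chosen level set has non-negligible $\pi$-measure (so $\|g\|_\pi$ is not tiny). This calls for a two-parameter pigeonhole argument simultaneously controlling $\|(I-\mathsf{P})\mathbf{1}_{\{f > s\}}\|_\pi$ and $\pi(\{f > s\})$, in the spirit of Cheeger-type inequalities adapted to non-reversible chains. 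Avoiding any dimension- or $\pi_{\min}$-dependent factors in the rounding to a level set, so that the final bound is cleanly $c/\widetilde{\gamma_{\mathsf{P}}}$, is the crux and will require carefully balancing these two contributions.
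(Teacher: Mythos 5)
Your proposal should first be set against what the paper actually does: the paper does not prove this statement at all --- it is imported from Chatterjee~\cite{chatterjee2023spectral} (Theorem 1.4), with only a footnote about real- versus complex-valued test functions --- so your argument has to stand on its own. Its first step is fine: taking a unit-norm, mean-zero minimizer $f$ of the singular-value problem and telescoping $\mathsf{P}^t f - f = -\sum_{s<t}\mathsf{P}^s(I-\mathsf{P})f$ together with $L^2(\pi)$-contractivity of $\mathsf{P}$ gives $\|\mathsf{P}^t f\|_\pi \ge 1 - t\widetilde{\gamma_{\mathsf{P}}}$, and you correctly diagnose that the naive passage to total variation costs a factor $\|f\|_\infty/\|f\|_\pi$, which can be as bad as $\pi_{\min}^{-1/2}$.

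The step you defer, however --- extracting a level-set function $g=\mathbf{1}_{\{f>s^*\}}-\pi(\{f>s^*\})$ with $\|(I-\mathsf{P})g\|_\pi\lesssim \widetilde{\gamma_{\mathsf{P}}}\,\|g\|_\pi$ and $\|g\|_\pi=\Omega(1)$ --- is not merely delicate; as a lemma it is false, so this route cannot be completed. Take the lazy simple random walk on the cycle $\mathbb{Z}_n$: there $\widetilde{\gamma_{\mathsf{P}}}\asymp n^{-2}$, yet for \emph{every} nonempty proper subset $S$ the function $(I-\mathsf{P})\mathbf{1}_S$ has entries of order $1$ at the boundary vertices of $S$, so $\|(I-\mathsf{P})\mathbf{1}_S\|_\pi\gtrsim n^{-1/2}$ while $\|\mathbf{1}_S-\pi(S)\|_\pi\le 1/2$; hence every indicator/level-set test function has ratio at least of order $n^{-1/2}$, polynomially larger than $\widetilde{\gamma_{\mathsf{P}}}$. (The theorem itself is not contradicted --- on the cycle the extremal function $\cos(2\pi x/n)$ is already bounded --- but the extraction lemma your plan hinges on fails.) Two further signs that the mechanism is wrong: the co-area-type inequality you write is an upper bound on $\|(I-\mathsf{P})f\|_\pi$ via Minkowski, so smallness of $\|(I-\mathsf{P})f\|_\pi$ imposes no constraint whatsoever on the level-set quantities; and the standard Cheeger-style rounding only controls the Dirichlet form $\langle f,(I-\mathsf{P})f\rangle_\pi\le\widetilde{\gamma_{\mathsf{P}}}$, yielding a set of conductance $O\bigl(\sqrt{\widetilde{\gamma_{\mathsf{P}}}}\bigr)$ and thus only $t_{\mathsf{mix}}\gtrsim \widetilde{\gamma_{\mathsf{P}}}^{-1/2}$, a quadratic loss against the claimed bound. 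So there is a genuine gap at exactly the point you identify as the crux, and closing it requires a different mechanism than level-set rounding --- which is precisely why the paper cites Chatterjee's theorem rather than reproving it.
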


For non-reversible chains, there is no clear relation between the singular values of the Laplacian and the eigenvalues of the transition matrix other than the obvious lower bound (see below). However, there is a coarse comparison that is possible in the setting that is the focus of this work:
\begin{lemma}
\label{lem:gibbs_comp}
    Let $\mathcal{X}=\mathcal{X}_1\times\ldots\times \mathcal{X}_n$ be a finite product space and let $\mathsf{P}=\mathsf{P}_n\ldots \mathsf{P}_1$ denote the transition matrix of the systematic scan with identity permutation.\footnote{Again, the actual Markov chain rerandomizes in the order $n,n-1,\ldots,1,n,\ldots$ by our convention.} Then
    \begin{equation*}
        \gamma_{\mathsf{P}}\leq \widetilde{\gamma_{\mathsf{P}}}\leq \sqrt{2n\gamma_{P}}.
    \end{equation*}
\end{lemma}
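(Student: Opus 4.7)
The plan is to prove the two inequalities separately, with the lower bound following from general Markov chain considerations and the upper bound exploiting the key structural fact that $\mathsf{P}$ is a product of orthogonal projections on $L^2(\mathcal{X},\pi)$.

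For the lower bound $\gamma_{\mathsf{P}} \leq \widetilde{\gamma_{\mathsf{P}}}$, the idea is the reverse triangle inequality. Note that each $\mathsf{P}_i$ is a conditional expectation, so preserves $\pi$-expectation, and hence $\mathsf{P}$ does too. Thus for any mean-zero $f$ with $\|f\|_\pi = 1$ we have $\|(\mathsf{I}-\mathsf{P})f\|_\pi \geq \|f\|_\pi - \|\mathsf{P}f\|_\pi \geq 1 - \|\mathsf{P}\|_{\pi,\mathsf{op}} = \gamma_{\mathsf{P}}$. Taking the infimum over such $f$ yields the first inequality.

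For the upper bound $\widetilde{\gamma_{\mathsf{P}}} \leq \sqrt{2n\gamma_{\mathsf{P}}}$, I would pick a near-extremizer $f$ of the operator norm: a mean-zero $f$ with $\|f\|_\pi = 1$ and $\|\mathsf{P}f\|_\pi = 1 - \gamma_{\mathsf{P}}$. Define the intermediate vectors $f_0 = f$ and $f_k = \mathsf{P}_k \mathsf{P}_{k-1}\cdots\mathsf{P}_1 f$ for $k = 1,\ldots,n$, so $f_n = \mathsf{P}f$. The crucial point is that $\mathsf{P}_k$ is an orthogonal projection, which gives the exact Pythagorean identity
\begin{equation*}
\|f_{k-1}\|_\pi^2 = \|\mathsf{P}_k f_{k-1}\|_\pi^2 + \|(\mathsf{I}-\mathsf{P}_k)f_{k-1}\|_\pi^2 = \|f_k\|_\pi^2 + \|f_{k-1} - f_k\|_\pi^2.
\end{equation*}
Telescoping gives $\sum_{k=1}^n \|f_{k-1} - f_k\|_\pi^2 = \|f\|_\pi^2 - \|\mathsf{P}f\|_\pi^2 = 1 - (1-\gamma_{\mathsf{P}})^2 \leq 2\gamma_{\mathsf{P}}$.

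To conclude, apply the triangle inequality and then Cauchy--Schwarz to the telescoping expansion of $(\mathsf{I} - \mathsf{P})f$:
\begin{equation*}
\|(\mathsf{I}-\mathsf{P})f\|_\pi = \bigl\|\sum_{k=1}^n (f_{k-1} - f_k)\bigr\|_\pi \leq \sum_{k=1}^n \|f_{k-1} - f_k\|_\pi \leq \sqrt{n}\sqrt{\sum_{k=1}^n \|f_{k-1}-f_k\|_\pi^2} \leq \sqrt{2n\gamma_{\mathsf{P}}}.
\end{equation*}
Since $f$ is mean-zero and unit norm, this witnesses $\widetilde{\gamma_{\mathsf{P}}} \leq \sqrt{2n\gamma_{\mathsf{P}}}$, completing the proof. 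The only step requiring real care is ensuring that all the orthogonality used in the Pythagorean identity genuinely applies, which hinges solely on each $\mathsf{P}_i$ being an orthogonal projection on $L^2(\mathcal{X},\pi)$; this is the structural feature of Gibbs samplers that fails for general non-reversible chains and is precisely why the comparison $\widetilde{\gamma}$ versus $\gamma$ can be made polynomial here.
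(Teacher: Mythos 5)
Your proposal is correct and follows essentially the same argument as the paper: the reverse triangle inequality for the lower bound, and for the upper bound an extremizer $f$ of the operator norm together with the Pythagorean identity for each projection $\mathsf{P}_k$, telescoping, and the triangle inequality with Cauchy--Schwarz (the paper writes this with squared norms via $\mathsf{Q}_i = \mathsf{P}_i\cdots\mathsf{P}_1$, but it is the identical computation). The only cosmetic difference is that in the finite-dimensional setting the extremizer is attained exactly, so no ``near-extremizer'' limiting step is needed.
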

\begin{proof}
    The first inequality holds for any $\mathsf{P}$; indeed, for any $f$ such that $\mathbb{E}_{\pi}[f]=0$ and $\|f\|_{\pi}=1$, the reverse triangle inequality implies that
    \begin{equation*}
\|(I-\mathsf{P})f\|_{\pi}\geq \|f\|_{\pi}-\|Pf\|_{\pi}\geq 1-\|P\|_{\pi,\mathsf{op}}.
    \end{equation*}
    Since this holds for any such $f$, the first inequality follows.

    The second inequality is a special property of Gibbs samplers. Since each $\mathsf{P}_i$ is an orthogonal projection in $L^2(\mathcal{X},\pi)$, we will be able to precisely track the movement of $f$ under each projection. Let $f$ with $\mathbb{E}_{\pi}[f]=0$ and $\|f\|=1$ attain the maximum for $\|\mathsf{P}\|_{\pi,\mathsf{op}}$ so that
    \begin{equation*}
        (1-\gamma_{\mathsf{P}})^2=\|\mathsf{P}f\|_{\pi}^2
    \end{equation*} 
    Define $\gamma_0 = 0$ and for each $1\leq i\leq n$, set $\mathsf{Q}_i=\mathsf{P}_i\ldots \mathsf{P}_1$ and $\gamma_i$ via
    \begin{equation*}
        1-\gamma_i = \|\mathsf{Q}_if\|_{\pi}^2.
    \end{equation*}
    Note that this sequence is nondecreasing since norms are nonincreasing under projections and that $\gamma_n=2\gamma_{\mathsf{P}}-\gamma_{\mathsf{P}}^2\leq 2\gamma_{\mathsf{P}}$. Moreover, the Pythagorean theorem implies
    \begin{equation*}
        \|\mathsf{Q}_if-\mathsf{Q}_{i-1}f\|_{\pi}^2 = \|\mathsf{Q}_{i-1}f\|_{\pi}^2-\|\mathsf{Q}_if\|_{\pi}^2=\gamma_i-\gamma_{i-1}.
    \end{equation*}
    For this $f$, we thus have by the triangle inequality and Cauchy-Schwarz that
    \begin{align*}
        \|(I-\mathsf{P})f\|_{\pi}^2 &= \|f-\mathsf{Q}_nf\|_{\pi}^2\\
        &=\left\|\sum_{i=1}^n (\mathsf{Q}_if - \mathsf{Q}f)\right\|_{\pi}^2\\
        &\leq \left(\sum_{i=1}^n\left\|\mathsf{Q}_if - \mathsf{Q}_{i-1}f\right\|_{\pi}\right)^2\\
        &\leq n\cdot \sum_{i=1}^n\left\|\mathsf{Q}_if - \mathsf{Q}_{i-1}f\right\|_{\pi}^2\\
        &=n\cdot \sum_{i=1}^n(\gamma_i-\gamma_{i-1})\\
        &= n\cdot (\gamma_n-\gamma_0)\\
        &\leq 2n\gamma_{\mathsf{P}}.
    \end{align*}
Taking roots completes the proof by the variational formula of the second smallest singular value of the Laplacian.
\end{proof}

\begin{corollary}
    There are absolute constants $c,C>0$ such that the following holds. Let $\mathcal{X}=\mathcal{X}_1\times\ldots\times \mathcal{X}_n$ be a finite product space and let $\mathsf{P}=\mathsf{P}_n\ldots \mathsf{P}_1$ denote the transition matrix of the systematic scan with identity permutation. Then
    \begin{equation*}
        \frac{c}{\sqrt{n\gamma_{\mathsf{P}}}}\leq t_{\mathsf{mix}}^{\mathsf{P}}(1/10)\leq \frac{C}{\gamma_{\mathsf{P}}} \log\left(\frac{1}{\pi_{\min}}\right)
    \end{equation*}
\end{corollary}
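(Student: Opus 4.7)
The corollary is a direct consequence of the results just established, so the plan is to simply chain together the three relevant inequalities and observe that they sandwich the mixing time in the claimed way.

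For the upper bound, I would appeal directly to the first part of \Cref{thm:markov_mixing}, which applies to any irreducible Markov chain. The systematic scan $\mathsf{P} = \mathsf{P}_n \ldots \mathsf{P}_1$ has stationary measure $\pi$ and is assumed here to be irreducible (which is inherited from the Gibbs setup in the finite case). Substituting $\varepsilon = 1/10$ into the bound $t^{\mathsf{P}}_{\mathsf{mix}}(\varepsilon) \leq \gamma_{\mathsf{P}}^{-1}\log(1/(\varepsilon \pi_{\min}))$ and absorbing the $\log(10)$ into the absolute constant $C$ yields the right-hand inequality.

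For the lower bound, I would invoke \Cref{thm:nr_lb} of Chatterjee, which says $t^{\mathsf{P}}_{\mathsf{mix}}(1/10) \geq c/\widetilde{\gamma_{\mathsf{P}}}$ for some absolute constant $c > 0$ with no reversibility assumption. This is precisely the non-reversible lower bound that lets us avoid spectral arguments that would otherwise fail for the scan. To convert the Laplacian singular value $\widetilde{\gamma_{\mathsf{P}}}$ into the operator-norm spectral gap $\gamma_{\mathsf{P}}$, I would apply the second inequality of \Cref{lem:gibbs_comp}, namely $\widetilde{\gamma_{\mathsf{P}}} \leq \sqrt{2n\gamma_{\mathsf{P}}}$, which is the special property of the scan derived from the fact that each $\mathsf{P}_i$ is an orthogonal projection. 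Chaining gives
\begin{equation*}
    t^{\mathsf{P}}_{\mathsf{mix}}(1/10) \geq \frac{c}{\widetilde{\gamma_{\mathsf{P}}}} \geq \frac{c}{\sqrt{2n\gamma_{\mathsf{P}}}},
\end{equation*}
and absorbing the $\sqrt{2}$ into a new absolute constant yields the left-hand inequality.

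There is no real obstacle here beyond citing \Cref{thm:markov_mixing}, \Cref{thm:nr_lb}, and \Cref{lem:gibbs_comp} in the right order. The only subtle point worth remarking on is why \Cref{lem:gibbs_comp} is needed at all: for general non-reversible chains, $\widetilde{\gamma_{\mathsf{P}}}$ and $\gamma_{\mathsf{P}}$ can be wildly different, so Chatterjee's bound alone would not directly yield a lower bound in terms of the operator-norm spectral gap. The Gibbs-sampler structure (each factor being an orthogonal projection, together with the Pythagorean telescoping argument already carried out in \Cref{lem:gibbs_comp}) is exactly what lets us lose only a factor of $\sqrt{n}$ between these two quantities, giving the clean sandwich stated in the corollary.
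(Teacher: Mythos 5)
Your proposal is correct and matches the paper's proof, which is exactly the one-line combination of \Cref{thm:markov_mixing} (for the upper bound), \Cref{thm:nr_lb}, and the inequality $\widetilde{\gamma_{\mathsf{P}}}\leq\sqrt{2n\gamma_{\mathsf{P}}}$ from \Cref{lem:gibbs_comp} (for the lower bound). Your added remark on why \Cref{lem:gibbs_comp} is needed is consistent with the paper's own discussion of the non-reversible setting.
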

\begin{proof}
    Simply combine \Cref{thm:markov_mixing}  and \Cref{thm:nr_lb} with \Cref{lem:gibbs_comp}.
\end{proof}

\section{Spectral Comparisons of Scans}
\label{sec:proof}

We now turn to proving our main results via abstract linear-algebraic bounds on operator norms. Throughout this section, for a bounded linear operator $\mathsf{A}:\mathcal{H}\to \mathcal{H}$ on a Hilbert space $\mathcal{H}$, we will write
\begin{equation*}
    \|\mathsf{A}\|_{\mathsf{op}} =\sup_{f\in \mathcal{H}} \frac{\|\mathsf{A}f\|}{\|f\|},
\end{equation*}
where $\|\cdot\|$ is the norm on $\mathcal{H}$ induced by the inner product.

\subsection{From Glauber to All Scans}
In this section, we prove bounds on the operator norm of systematic or pseudorandom scans in terms the spectral gap of Glauber dynamics. By the results of \Cref{sec:preliminaries}, this will yield our optimal mixing time comparison result. We will prove the following linear-algebraic bound from which we will derive all these results:

\begin{theorem}
\label{thm:gen_gap}
        Let $\mathcal{H}$ be a Hilbert space and suppose $\mathsf{P}_1,\ldots, \mathsf{P}_n$ are orthogonal projections with respect to the inner product of $\mathcal{H}$ satisfying
        \begin{equation}
    \left\|\frac{1}{n}\sum_{i=1}^n \mathsf{P}_i\right\|_{\mathsf{op}}\leq 1-\delta.
\end{equation} 
Let $\mathsf{P}_{i_L}\ldots \mathsf{P}_{i_1}$ be any product of the orthogonal projection operators satisfying the following condition. For each $j\in [n]$, suppose there exists a minimal index $k_j\leq L$ satisfying $i_{k_j}=j$. We further assume these indices satisfy\footnote{In words, the indices $i_1,\ldots,i_L$ contain $[n]$ in ascending order of first appearance and are a minimal subsequence with this property.}:
\begin{equation}
\label{eq:order}
        1=k_1\leq k_2\leq\ldots\leq k_n=L.
\end{equation}
Then, it holds that
\begin{equation*}
    \|\mathsf{P}_{i_L}\ldots \mathsf{P}_{i_1}\|_{\mathsf{op}}^2\leq 1-\frac{n\delta}{8\sum_{j=1}^n k_j}.
\end{equation*}
\end{theorem}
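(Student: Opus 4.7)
\textbf{Proof plan for Theorem \ref{thm:gen_gap}.} Fix an arbitrary unit vector $f \in \mathcal{H}$, and set up the partial products $\mathsf{Q}_0 := I$ and $\mathsf{Q}_\ell := \mathsf{P}_{i_\ell}\cdots\mathsf{P}_{i_1}$ for $1 \leq \ell \leq L$, along with the ``norm deficits'' $\gamma_\ell$ defined by $\|\mathsf{Q}_\ell f\|^2 = 1 - \gamma_\ell$. Since each $\mathsf{P}_{i_\ell}$ is an orthogonal projection, the sequence $(\gamma_\ell)$ is nondecreasing, and the Pythagorean identity applied to the decomposition $\mathsf{Q}_{\ell-1}f = \mathsf{Q}_\ell f + (\mathsf{Q}_{\ell-1}f - \mathsf{Q}_\ell f)$ (where the two summands are orthogonal because $\mathsf{Q}_\ell f$ lies in the range of $\mathsf{P}_{i_\ell}$ while $\mathsf{Q}_{\ell-1}f - \mathsf{Q}_\ell f$ lies in its kernel) gives the exact telescoping relation
\begin{equation*}
\|\mathsf{Q}_\ell f - \mathsf{Q}_{\ell-1} f\|^2 = \gamma_\ell - \gamma_{\ell-1}.
\end{equation*}
The ultimate goal is to lower-bound $\gamma_L$ uniformly in $f$.

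The heart of the argument is a pointwise bound on $\|\mathsf{P}_j f - f\|^2$ for each $j \in [n]$ in terms of the partial deficits. The crucial observation is that $\mathsf{P}_j$ is precisely the projection $\mathsf{P}_{i_{k_j}}$ appearing at position $k_j$ in the product, so $\mathsf{P}_j \mathsf{Q}_{k_j - 1}f = \mathsf{Q}_{k_j}f$. Using this, I write
\begin{equation*}
\mathsf{P}_j f - f = \mathsf{P}_j(f - \mathsf{Q}_{k_j-1}f) + (\mathsf{Q}_{k_j}f - \mathsf{Q}_{k_j-1}f) + (\mathsf{Q}_{k_j-1}f - f),
\end{equation*}
then expand the first and third terms telescopically as $\sum_{\ell=1}^{k_j-1}(\mathsf{Q}_{\ell-1}f - \mathsf{Q}_\ell f)$. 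Applying the triangle inequality together with $\|\mathsf{P}_j\|_{\mathsf{op}} \leq 1$, then Cauchy--Schwarz on the resulting sum of at most $2k_j$ terms, and finally the Pythagorean identity above, should yield the clean estimate
\begin{equation*}
\|\mathsf{P}_j f - f\|^2 \leq 4 k_j\, \gamma_{k_j}.
\end{equation*}
This is the crux of the ``charging'' intuition flagged in Section \ref{sec:overview_tech}: the movement incurred when $\mathsf{P}_j$ first hits $f$ is charged to the sum of individual projection movements up to step $k_j$, each of which contributes exactly to the increments of $\gamma_\ell$.

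To finish, I expand
\begin{equation*}
\left\|\tfrac{1}{n}\sum_{i=1}^n \mathsf{P}_i f\right\|^2 = \|f\|^2 + \tfrac{2}{n}\sum_{i=1}^n \langle f,\mathsf{P}_i f - f\rangle + \left\|\tfrac{1}{n}\sum_{i=1}^n(\mathsf{P}_i f - f)\right\|^2,
\end{equation*}
and use the orthogonal projection identity $\langle f,\mathsf{P}_i f - f\rangle = \|\mathsf{P}_i f\|^2 - \|f\|^2 = -\|\mathsf{P}_i f - f\|^2$ to lower-bound this by $1 - \tfrac{2}{n}\sum_{i=1}^n \|\mathsf{P}_i f - f\|^2$. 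Combining with the hypothesis $\|\tfrac{1}{n}\sum_i \mathsf{P}_i\|_{\mathsf{op}}^2 \leq (1-\delta)^2 \leq 1 - \delta$, plugging in the pointwise bound, and using monotonicity $\gamma_{k_j} \leq \gamma_L$, I obtain
\begin{equation*}
\delta \leq \frac{2}{n}\sum_{j=1}^n \|\mathsf{P}_j f - f\|^2 \leq \frac{8}{n}\sum_{j=1}^n k_j\, \gamma_{k_j} \leq \frac{8\gamma_L}{n}\sum_{j=1}^n k_j,
\end{equation*}
which rearranges to the claimed bound after taking the supremum over unit $f$.

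The main obstacle is getting the telescoping decomposition of $\mathsf{P}_j f - f$ right so that the factor $k_j$ (rather than something like $L$) appears in the final estimate; this relies on exploiting the minimality of the index $k_j$ of first appearance of $\mathsf{P}_j$. Once the pointwise bound $\|\mathsf{P}_j f - f\|^2 \leq 4 k_j \gamma_{k_j}$ is in hand, the rest is a routine expansion of the hypothesis.
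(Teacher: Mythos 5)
Your proposal is correct and follows essentially the same argument as the paper's proof: the same Pythagorean telescoping identity $\|\mathsf{Q}_\ell f-\mathsf{Q}_{\ell-1}f\|^2=\gamma_\ell-\gamma_{\ell-1}$, the same charging bound $\|\mathsf{P}_jf-f\|^2\leq 4k_j\gamma_{k_j}$ obtained by decomposing $\mathsf{P}_jf-f$ through the first appearance $k_j$ and applying the triangle inequality and Cauchy--Schwarz, and the same final expansion using $\langle f,\mathsf{P}_jf-f\rangle=-\|\mathsf{P}_jf-f\|^2$ against the hypothesis on the averaged operator. Your three-term decomposition of $\mathsf{P}_jf-f$ is just a cosmetic rewriting of the paper's two-term one, so the proofs coincide in substance.
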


Note that \Cref{thm:gen_gap} holds under any other (non-identity) ordering in \Cref{eq:order} of first appearances of the set of indices in $\mathsf{P}_{i_L}\ldots \mathsf{P}_{i_1}$ by symmetry; we choose to write it for the identity ordering simply to avoid additional notation. The following is immediate:

\begin{corollary}
\label{cor:spectral_scan}
    Under the assumptions of \Cref{thm:gen_gap}, it holds that 
    \begin{equation*}
        \|\mathsf{P}_n\ldots \mathsf{P}_1\|_{\mathsf{op}}\leq 1-\frac{\delta}{8(n+1)}.
    \end{equation*}
\end{corollary}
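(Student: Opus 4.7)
The plan is to fix an arbitrary $f\in\mathcal{H}$ with $\|f\|=1$ and carefully track how its norm contracts along the partial products $\mathsf{Q}_\ell := \mathsf{P}_{i_\ell}\cdots \mathsf{P}_{i_1}$ (with $\mathsf{Q}_0:=I$). Define the nonnegative sequence $\gamma_\ell$ by $\|\mathsf{Q}_\ell f\|^2 = 1-\gamma_\ell$. Since each $\mathsf{P}_{i_\ell}$ is an orthogonal projection, $\mathsf{Q}_{\ell-1}f = \mathsf{Q}_\ell f + (\mathsf{Q}_{\ell-1}f - \mathsf{Q}_\ell f)$ is an orthogonal decomposition, so the sequence $\gamma_\ell$ is nondecreasing and, crucially, the Pythagorean identity gives the \emph{exact} increment
\begin{equation*}
\|\mathsf{Q}_\ell f - \mathsf{Q}_{\ell-1} f\|^2 = \gamma_\ell - \gamma_{\ell-1}.
\end{equation*}

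The heart of the argument is to bound $\|\mathsf{P}_j f - f\|$ in terms of only the first $k_j$ increments, thereby exploiting the hypothesis on the order of first appearances. Since $i_{k_j} = j$, we have the identity $\mathsf{P}_j \mathsf{Q}_{k_j-1} = \mathsf{Q}_{k_j}$, which lets us write
\begin{equation*}
\mathsf{P}_j f - f \;=\; \mathsf{P}_j\bigl(f - \mathsf{Q}_{k_j-1} f\bigr) + \bigl(\mathsf{Q}_{k_j} f - f\bigr).
\end{equation*}
Each of the two differences on the right telescopes as a sum of consecutive increments $\mathsf{Q}_\ell f - \mathsf{Q}_{\ell-1}f$ with $\ell \leq k_j$. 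Applying the triangle inequality (using that $\mathsf{P}_j$ is a contraction), followed by Cauchy--Schwarz to pass from absolute to squared norms, yields
\begin{equation*}
\|\mathsf{P}_j f - f\|^2 \;\leq\; 4k_j \sum_{\ell=1}^{k_j}\|\mathsf{Q}_\ell f - \mathsf{Q}_{\ell-1} f\|^2 \;=\; 4k_j\,\gamma_{k_j} \;\leq\; 4k_j\,\gamma_L.
\end{equation*}

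To close, I would combine these per-coordinate bounds with the hypothesis on the average. Since each $\mathsf{P}_i$ is a self-adjoint projection, $\langle f, \mathsf{P}_i f\rangle = \|\mathsf{P}_i f\|^2$, and another Pythagorean step gives $\langle f, \mathsf{P}_i f - f\rangle = -\|\mathsf{P}_i f - f\|^2$. Expanding the square and dropping the nonnegative cross term,
\begin{equation*}
\Bigl\|\tfrac{1}{n}\sum_{i=1}^n \mathsf{P}_i f\Bigr\|^2 \;=\; 1 - \tfrac{2}{n}\sum_{i=1}^n \|\mathsf{P}_i f - f\|^2 + \Bigl\|\tfrac{1}{n}\sum_{i=1}^n(\mathsf{P}_i f - f)\Bigr\|^2 \;\geq\; 1 - \tfrac{2}{n}\sum_{i=1}^n \|\mathsf{P}_i f - f\|^2.
\end{equation*}
Comparing with the hypothesis, which forces the left side to be at most $(1-\delta)^2 \leq 1-\delta$, and plugging in the per-coordinate bound gives $\delta \leq (8\gamma_L/n)\sum_{j=1}^n k_j$. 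Rearranging and taking the supremum over $f$ delivers the stated bound.

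The main obstacle I anticipate is the charging step: one must produce a bound on $\|\mathsf{P}_j f - f\|$ that depends only on increments up to index $k_j$, since the Pythagorean accounting for the full product gives no direct access to the movement induced by $\mathsf{P}_j$ itself. The hypothesis that $k_j$ is the \emph{first} appearance of $j$ is exactly what legitimates the substitution $\mathsf{P}_j \mathsf{Q}_{k_j-1} = \mathsf{Q}_{k_j}$; without it, the telescoping fails and the dependence on $\sum_j k_j$ (which is the sharp quantity, as noted in the excerpt) is unrecoverable.
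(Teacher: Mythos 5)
Your proposal is correct and follows essentially the same route as the paper: the paper's proof of \Cref{cor:spectral_scan} simply invokes \Cref{thm:gen_gap} with $k_j=j$, and your argument is precisely the paper's proof of that theorem (orthogonal decomposition along the partial products, Pythagorean increments, the charging bound $\|\mathsf{P}_j f-f\|^2\leq 4k_j\gamma_{k_j}$, and expansion of the averaged operator), specialized to the identity scan. The only missing detail is that your final inequality controls the \emph{squared} norm, giving $\|\mathsf{P}_n\cdots\mathsf{P}_1 f\|^2\leq 1-\tfrac{\delta}{4(n+1)}$, so one last square-root (Bernoulli) step is needed to reach the stated $1-\tfrac{\delta}{8(n+1)}$.
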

\begin{proof}
    For the systematic scan, it is clear that the assumptions of \Cref{thm:gen_gap} are satisfied since we have $k_j=j$ for each $j\leq n$. Therefore, \Cref{thm:gen_gap} implies that
    \begin{equation*}
        \|\mathsf{P}_n\ldots \mathsf{P}_1\|_{\mathsf{op}}^2\leq 1-\frac{n\delta}{8\sum_{j=1}^n j}=1-\frac{\delta}{4(n+1)}.
    \end{equation*}
    The desired conclusion then follows from Bernoulli's inequality.
\end{proof}

We can now easily conclude our first main result, which we restate for convenience:
\begin{corollary}[\Cref{thm:mixing}, restated]
\label{cor:mixing}
    There exists an absolute constant $C>0$ such that the following holds for any $0<\varepsilon\leq 1$. Let $\mathcal{X}=\mathcal{X}_1\times \ldots\times \mathcal{X}_n$ be a product of finite state spaces and let $\pi$ be any distribution on $\mathcal{X}$. Then
    \begin{equation*}
        \max_{\sigma}t^{\mathsf{SS}(\sigma)}_{\mathrm{mix}}(\varepsilon)\leq Cnt^{\mathsf{GD}}_{\mathrm{mix}}(1/4)\log\left(\frac{1}{\varepsilon\pi_{\mathrm{min}}}\right).
    \end{equation*}
\end{corollary}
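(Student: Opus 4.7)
The plan is to chain together the spectral gap comparison of Corollary 3.1 with the two halves of Theorem 2.1 to pass from mixing time to spectral gap to mixing time, exploiting the crucial fact that Glauber dynamics is reversible.

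First, I would use the reversible lower bound (Theorem 2.1, part 2) applied to $\mathsf{P}_{\mathsf{GD}}$. Since $\mathsf{P}_{\mathsf{GD}}$ is self-adjoint with respect to $\pi$, setting $\varepsilon = 1/4$ yields
\begin{equation*}
    \left(\frac{1}{\gamma_{\mathsf{GD}}} - 1\right)\log(2) \leq t^{\mathsf{GD}}_{\mathrm{mix}}(1/4),
\end{equation*}
and rearranging gives a spectral gap lower bound of the form $\gamma_{\mathsf{GD}} \geq c_0 / t^{\mathsf{GD}}_{\mathrm{mix}}(1/4)$ for an explicit constant $c_0 > 0$ (any reasonable constant such as $\varepsilon = 1/4$ would also work). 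In particular, the hypothesis of Corollary 3.1 holds with $\delta = \gamma_{\mathsf{GD}}$.

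Next, I would apply Corollary 3.1 to the operators $\mathsf{P}_{\sigma(1)}, \ldots, \mathsf{P}_{\sigma(n)}$. By the symmetry remark after Theorem 3.1, the ordering of first appearances is immaterial, so the corollary yields
\begin{equation*}
    \|\mathsf{P}_{\sigma}\|_{\pi,\mathsf{op}} = \|\mathsf{P}_{\sigma(n)} \cdots \mathsf{P}_{\sigma(1)}\|_{\pi,\mathsf{op}} \leq 1 - \frac{\gamma_{\mathsf{GD}}}{8(n+1)},
\end{equation*}
i.e.\ the scan has spectral gap at least $\gamma_{\mathsf{GD}}/(8(n+1))$. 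Combining with the previous step, the spectral gap of $\mathsf{P}_\sigma$ is at least of order $1 / (n \cdot t^{\mathsf{GD}}_{\mathrm{mix}}(1/4))$.

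Finally, since $\mathsf{P}_{\sigma}$ is an irreducible Markov chain (the product of orthogonal projections is a contraction, and irreducibility of one full scan follows from the Gibbs sampler structure), the general upper bound of Theorem 2.1, part 1, applies to $\mathsf{P}_\sigma$ and gives
\begin{equation*}
    t^{\mathsf{SS}(\sigma)}_{\mathrm{mix}}(\varepsilon) \leq \frac{1}{\gamma_{\mathsf{P}_\sigma}} \log\!\left(\frac{1}{\varepsilon \pi_{\min}}\right) \leq C n \cdot t^{\mathsf{GD}}_{\mathrm{mix}}(1/4) \cdot \log\!\left(\frac{1}{\varepsilon \pi_{\min}}\right),
\end{equation*}
as desired. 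I do not expect any serious obstacle: all three steps are either restatements of cited results or a direct substitution. The only mildly subtle point is to confirm that the general (non-reversible) upper bound in Theorem 2.1 is what is applied on the scan side, while the reversible lower bound is used on the Glauber side; both are available since $\mathsf{P}_{\mathsf{GD}}$ is self-adjoint even though $\mathsf{P}_\sigma$ typically is not.
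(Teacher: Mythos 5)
Your proposal is correct and follows essentially the same route as the paper: the paper likewise combines \Cref{cor:spectral_scan} (to get $\gamma_{\mathsf{SS}(\sigma)}\geq \gamma_{\mathsf{GD}}/(8(n+1))$) with the two parts of \Cref{thm:markov_mixing}, using the reversible lower bound for Glauber and the general upper bound for the scan. The only difference is the order in which the steps are presented, which is immaterial.
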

\begin{proof}
Let $\gamma_{\mathsf{GD}}>0$ denote the spectral gap of the Glauber dynamics and $\gamma_{\mathsf{SS}(\sigma)}$ the spectral gap of the systematic scan with permutation $\sigma$. \Cref{cor:spectral_scan} instantiated with the Gibbs transition matrices implies 
\begin{equation*}
    \gamma_{\mathsf{SS}(\sigma)}\geq \frac{\gamma_{\mathsf{GD}}}{8(n+1)}.
\end{equation*}
Applying \Cref{thm:markov_mixing} twice (recalling that the Glauber dynamics are reversible) implies that
\begin{equation*}
    t^{\mathsf{SS}(\sigma)}_{\mathrm{mix}}\leq \frac{1}{\gamma_{\mathsf{SS}(\sigma)}}\cdot \log\left(\frac{1}{\varepsilon\pi_{\mathrm{min}}}\right)\leq\frac{8(n+1)}{\gamma_{\mathsf{GD}}}\cdot \log\left(\frac{1}{\varepsilon\pi_{\mathrm{min}}}\right)\leq Cnt^{\mathsf{GD}}_{\mathrm{mix}}(1/4)\log\left(\frac{1}{\varepsilon\pi_{\mathrm{min}}}\right).\qedhere
\end{equation*}
\end{proof}

We also briefly show that the previous arguments also hold for any sufficiently pseudorandom sequences of projections.
\begin{theorem}
\label{thm:pseudo}
    Under the assumptions of \Cref{cor:spectral_scan}, the following holds. Let $i_{\ell}\sim [n]$ be uniform, independent random indices for each $\ell\geq 1$, and let $L$ denote the first index where $\{i_1,\ldots,i_L\}=[n]$. Then there is an event $\mathcal{E}$ such that: (i) $\Pr(\mathcal{E})=1-o(1)$, (ii) on $\mathcal{E}$, it holds that $L\leq 2n\log n$ and 
    \begin{equation*}
        \|\mathsf{P}_{i_L}\ldots \mathsf{P}_{i_1}\|_{\mathsf{op}}\leq 1-\frac{\delta}{32n},
    \end{equation*}
    and (iii) the event $\mathcal{E}$ can be certified to hold in linear time given the indices $i_1,\ldots,i_L$ as input.
\end{theorem}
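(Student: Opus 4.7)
The plan is to apply \Cref{thm:gen_gap} to the random sequence and reduce the analysis to a coupon-collector calculation. By the symmetry remark following \Cref{thm:gen_gap}, I may relabel the projections so that each distinct $\mathsf{P}_j$ appears for the first time in ascending order of label; this does not change the product or its operator norm, but it places us in the setting of \Cref{eq:order}. After relabeling, the first-appearance indices $k_1 \leq \cdots \leq k_n = L$ are precisely the coupon-collector waiting times $T_1 < \cdots < T_n$ for uniform independent samples from $[n]$, so the increments $T_j - T_{j-1}$ (with $T_0 := 0$) are independent, with $T_j - T_{j-1} \sim \mathsf{Geom}((n-j+1)/n)$.

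I then bound $\sum_{j=1}^n k_j$ via the second moment method. Exchanging the order of summation gives
\begin{equation*}
    \sum_{j=1}^n k_j \;=\; \sum_{i=1}^n (n-i+1)(T_i - T_{i-1}),
\end{equation*}
whose expectation telescopes to $\sum_{i=1}^n (n-i+1)\cdot n/(n-i+1) = n^2$. Since $\mathsf{Geom}(p)$ has variance $(1-p)/p^2$, the variance of the $i$th summand is $(n-i+1)^2 \cdot n(i-1)/(n-i+1)^2 = n(i-1)$, so the total variance is $O(n^3)$. Chebyshev's inequality then gives $\sum_{j=1}^n k_j \leq 2n^2$ with probability $1 - O(1/n)$. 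A standard union bound over the $n$ coupons gives $\Pr(L > 2n\log n) \leq n(1-1/n)^{2n\log n} \leq 1/n$, so both events hold simultaneously on an event $\mathcal{E}$ with $\Pr(\mathcal{E}) = 1 - o(1)$.

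On $\mathcal{E}$, applying \Cref{thm:gen_gap} with the upper bound $\sum_j k_j \leq 2n^2$ yields
\begin{equation*}
    \|\mathsf{P}_{i_L} \cdots \mathsf{P}_{i_1}\|_{\mathsf{op}}^2 \;\leq\; 1 - \frac{n\delta}{8 \sum_{j=1}^n k_j} \;\leq\; 1 - \frac{\delta}{16n},
\end{equation*}
and the elementary inequality $\sqrt{1-x} \leq 1 - x/2$ converts this to the claimed bound $1 - \delta/(32n)$. Certification of $\mathcal{E}$ in linear time is immediate: a single pass through $i_1, \ldots, i_L$ records the first occurrence $k_j$ of each label, verifies that all of $[n]$ appears by time $L$, compares $\sum_j k_j$ against $2n^2$, and checks $L \leq 2n\log n$, all in $O(L) = O(n \log n)$ time.

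The main obstacle is establishing the concentration of $\sum_j k_j$ around its mean $n^2$: Markov's inequality alone gives only constant failure probability, so one must exploit the independence of the geometric increments via Chebyshev (or sharper geometric-tail bounds). Everything else --- the symmetry-based relabeling, the coupon-collector tail for $L$, and the square-root conversion to an operator-norm bound --- is routine given \Cref{thm:gen_gap}.
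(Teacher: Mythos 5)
Your proposal is correct and follows essentially the same route as the paper's proof: relabel by symmetry to invoke \Cref{thm:gen_gap}, identify the first-appearance times with coupon-collector waiting times whose geometric increments give $\mathbb{E}\bigl[\sum_j k_j\bigr]=n^2$ and variance $O(n^3)$, apply Chebyshev plus the standard coupon-collector tail to get $\Pr(\mathcal{E})=1-O(1/n)$, and convert the squared-norm bound $1-\delta/(16n)$ to $1-\delta/(32n)$. The linear-time certification argument also matches the paper's.
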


\begin{proof}
    By changing notation slightly, for each $j\in [n]$, define $k_j$ recursively by
    \begin{gather*}
        k_1 = i_1,\\
        k_{j} = \min\{\ell:i_{\ell}\not\in \{i_1,\ldots,i_{\ell-1}\}\}.
    \end{gather*}
    These indices denote the first times a new index appears in the sequence and $k_n=L$ by definition.

    Let $\mathcal{E}$ denote the event that $k_n\leq 2n \log n$ and that
    \begin{equation*}
        \sum_{j=1}^n k_j\leq 2 n^2.
    \end{equation*}
    Note that by symmetry of indices, \Cref{thm:gen_gap} clearly still implies that on this event,
    \begin{equation*}
         \|\mathsf{P}_{i_L}\ldots \mathsf{P}_{i_1}\|_{\mathsf{op}}^2\leq 1-\frac{n\delta}{16n^2}\leq 1-\frac{\delta}{16n}.
    \end{equation*}
    Applying Bernoulli's inequality thus shows that the desired operator norm bound holds. As the event $\mathcal{E}$ is clearly easy to certify given the sequence of updated indices in linear time, it suffices to prove $\Pr(\mathcal{E})=1-o(1)$.

    We do this by providing a crude second moment bound; certainly tighter analyses are possible at the cost of further computation. We first rewrite
    \begin{align*}
        \sum_{j=1}^{n} k_j &= \sum_{j=1}^{n}\sum_{\ell=1}^j (k_{\ell}-k_{\ell-1})\\
        &=\sum_{\ell=1}^n (n-\ell+1)(k_{\ell}-k_{\ell-1}),
    \end{align*}
    where we define $k_0=0$. It is well-known and elementary from a standard coupon collector argument that for $\ell=1,\ldots, n$, $k_{\ell}-k_{\ell-1}\sim \mathsf{Geom}(p_{\ell})$, where $p_{\ell}=\frac{n-\ell+1}{n}$. We thus have
    \begin{gather*}
        \mathbb{E}[k_{\ell}-k_{\ell-1}]=\frac{1}{p_{\ell}},\\
        \mathrm{Var}(k_{\ell}-k_{\ell-1})=\frac{1-p_{\ell}}{p_{\ell}^2}.
    \end{gather*}
    The first identity implies
    \begin{align*}
        \mathbb{E}\left[\sum_{\ell=j}^nk_{j}\right]&=\sum_{j=1}^n\sum_{\ell=1}^j\mathbb{E}[k_{\ell}-k_{\ell-1}]\\
        &=\sum_{j=1}^n\sum_{\ell=1}^j \frac{1}{p_{\ell}}\\
        &=\sum_{\ell=1}^n \frac{n-\ell+1}{p_{\ell}}\\
        &=n^2.
    \end{align*}
    
    These increments are also independent, and so
    \begin{align*}
        \mathrm{Var}\left(\sum_{j=1}^{n} k_j\right)&=\mathrm{Var}\left(\sum_{\ell=1}^n (n-\ell+1)(k_{\ell}-k_{\ell-1})\right)\\
        &=\sum_{\ell=1}^n (n-\ell+1)^2\mathrm{Var}(k_{\ell}-k_{\ell-1})\\
        &=n^2\sum_{\ell=1}^n (1-p_{\ell})\\
        &\leq n^3.
    \end{align*}

    It follows from Chebyshev's inequality that
    \begin{align*}
        \Pr\left(\sum_{j=1}^n k_j\geq 2n^2\right)\leq \Pr\left(\left\vert\sum_{j=1}^n k_j-\mathbb{E}\left[\sum_{j=1}^n k_j\right]\right\vert\geq n^2\right)\leq \frac{1}{n}.
    \end{align*}
    It is also classical that $\Pr(k_n\geq 2n\log n)\leq \frac{1}{n}$ by a standard coupon collector analysis~\cite{levin2017markov}. By a union bound, it thus follows that $\Pr(\mathcal{E})\geq 1-2/n$, as desired.    
\end{proof}

With these applications in order, we may return to the proof of \Cref{thm:gen_gap}:

\begin{proof}[Proof of \Cref{thm:gen_gap}]
We use similar notation as in \Cref{lem:gibbs_comp}. Fix any $f\in \mathcal{H}$ satisfying $\|f\|=1$ and define the sequence $0:=\gamma_0\leq \gamma_1\leq\ldots\leq \gamma_L$ via
\begin{equation*}
    \|\mathsf{P}_{i_{\ell}}\ldots \mathsf{P}_{i_1}f\|^2=1-\gamma_{\ell}.
\end{equation*}
For convenience of notation, further define the operators $\mathsf{Q}_{\ell}$ via 
\begin{equation*}
    \mathsf{Q}_{\ell} = \mathsf{P}_{i_{\ell}}\ldots \mathsf{P}_{i_1}.
\end{equation*}

For each $\ell\leq L-1$, the orthogonal decomposition
    \begin{equation*}
        \mathsf{Q}_{{\ell}}f = \mathsf{P}_{i_{\ell+1}}( \mathsf{Q}_{{\ell}}f)+(\mathsf{Q}_{{\ell}}f-P_{i_{\ell+1}}\mathsf{Q}_{{\ell}} f)= \mathsf{Q}_{{\ell+1}}f+(\mathsf{Q}_{{\ell}}f-\mathsf{Q}_{{\ell+1}} f)
    \end{equation*}
    certifies that
    \begin{equation}
    \label{eq:telescope}
        \|\mathsf{Q}_{{\ell}}f-\mathsf{Q}_{{\ell+1}} f\|^2=\|\mathsf{Q}_{{\ell}}f\|^2-\|\mathsf{Q}_{{\ell+1}}f\|^2=\gamma_{\ell+1}-\gamma_{\ell}.
    \end{equation}

    We now turn to bounding $\|\mathsf{P}_j f-f\|$. Since $i_{k_j}=j$ by definition,
    \begin{align*}
        \mathsf{P}_j f-f &= \mathsf{P}_{j}(f-\mathsf{P}_{i_{k_{j}-1}}\ldots \mathsf{P}_{i_1} f) + (\mathsf{P}_{i_{k_j}}\ldots \mathsf{P}_{i_1}f -f)\\
        &= \mathsf{P}_{j}(f-\mathsf{Q}_{{k_{j}-1}} f) + (\mathsf{Q}_{{k_j}}f -f)\\
        &=\mathsf{P}_{j}\left(\sum_{\ell=1}^{k_j-1} (\mathsf{Q}_{{\ell-1}} f-\mathsf{Q}_{{\ell}} f)\right)+\sum_{\ell=1}^{k_j} (\mathsf{Q}_{{\ell}} f -  \mathsf{Q}_{{\ell-1}} f).
    \end{align*}

    Taking norms, applying the triangle inequality and using the fact $\mathsf{P}_j$ is a projection, we obtain
\begin{align*}
    \| \mathsf{P}_j f-f\|&\leq \left\| \mathsf{P}_{j}\left(\sum_{\ell=1}^{k_j-1} (\mathsf{Q}_{{\ell-1}} f-\mathsf{Q}_{{\ell}} f)\right)\right\|+\sum_{\ell=1}^{k_j} \left\|\mathsf{Q}_{{\ell}} f -  \mathsf{Q}_{{\ell-1}} f\right\|\\
    &\leq  \sum_{\ell=1}^{k_j-1} \left\|\mathsf{Q}_{{\ell-1}} f-\mathsf{Q}_{{\ell}} f\right\|+\sum_{\ell=1}^{k_j} \left\|\mathsf{Q}_{{\ell}} f -  \mathsf{Q}_{{\ell-1}} f\right\|\\
    &\leq 2 \sum_{\ell=1}^{k_j} \left\|\mathsf{Q}_{{\ell}} f -  \mathsf{Q}_{{\ell-1}} f\right\|.
\end{align*}
Taking squares, applying Cauchy-Schwarz, and telescoping via \Cref{eq:telescope}, we find that
\begin{align*}
    \| \mathsf{P}_j f-f\|^2&\leq \left(2 \sum_{\ell=1}^{k_j} \left\|\mathsf{Q}_{{\ell}} f -  \mathsf{Q}_{{\ell-1}} f\right\|\right)^2\\
    &\leq 4\left(\sqrt{k_j}\sqrt{\sum_{\ell=1}^{k_j} \left\|\mathsf{Q}_{{\ell}} f -  \mathsf{Q}_{{\ell-1}} f\right\|^2}\right)^2\\
    &=4k_j\sum_{\ell=1}^{k_j} \left\|\mathsf{Q}_{{\ell}} f -  \mathsf{Q}_{{\ell-1}} f\right\|^2\\
    &=4k_j\sum_{\ell=1}^{k_j} (\gamma_{\ell}-\gamma_{\ell-1})\\
    &=4k_j\gamma_{k_j}.
\end{align*}

Finally, we observe by properties of orthogonal projection and the assumption $\|f\|=1$ that
\begin{align*}
    \left\| \frac{1}{n}\sum_{j=1}^n \mathsf{P}_jf\right\|^2&=\left\|f+ \frac{1}{n}\sum_{j=1}^n (\mathsf{P}_jf-f)\right\|^2\\
    &=\|f\|^2+2\left\langle f,\frac{1}{n}\sum_{j=1}^n (\mathsf{P}_jf-f)\right\rangle+\left\|\frac{1}{n}\sum_{j=1}^n (\mathsf{P}_jf-f)\right\|^2\\
    &\geq \|f\|^2-\frac{2}{n}\sum_{j=1}^n \vert \langle f,\mathsf{P}_jf-f\rangle\vert\\
    &=1-\frac{2}{n}\sum_{j=1}^n \|\mathsf{P}_jf-f\|^2\\
    &\geq 1-\frac{8}{n}\sum_{j=1}^n k_j\gamma_{k_j}\\
    &\geq 1-\frac{8\gamma_{L}}{n}\cdot \sum_{j=1}^n k_j.
\end{align*}
To see the last equality, note that
\begin{equation*}
    \langle f,\mathsf{P}_jf-f\rangle = \langle f-\mathsf{P}_jf,\mathsf{P}_jf-f\rangle+ \langle\mathsf{P}_jf,\mathsf{P}_jf-f\rangle = -\|\mathsf{P}_jf-f\|^2,
\end{equation*}
since any orthogonal projection is orthogonal to the remainder by the Pythagorean property.
Finally, by assumption the left hand side is at most $(1-\delta)^2\leq 1-\delta$ so rearranging yields
\begin{equation*}
    \gamma_{L} \geq \frac{n\delta}{8\sum_{j=1}^n k_j}.
\end{equation*}
Since this holds for any $f\in \mathcal{H}$ of unit norm, we have proven the claim.
\end{proof}

\subsection{From Scans to Glauber}

We now show how a variation of the argument of \Cref{thm:gen_gap} can be used to show a converse: if all, or even any, systematic scans have a spectral gap, then the Glauber dynamics has a spectral gap that is polynomially related. This provides a simple resolution to Open Problem 3.7 of Chlebicka,  Łatuszy\'nski, and Miasodejow~\cite{chlebicka}. However, unlike \Cref{thm:mixing}, we expect that our quantitative dependence is quantitatively suboptimal.

\begin{theorem}
\label{thm:converse}
    Let $\mathcal{H}$ be a Hilbert space and suppose $\mathsf{P}_1,\ldots,\mathsf{P}_n$ are orthogonal projections with respect to the inner product of $\mathcal{H}$. There is an absolute constant $c_{\ref{thm:converse}}>0$ such that the following holds for any $\delta>0$:
    \begin{enumerate}
        \item \label{item:all} Suppose that for every permutation $\sigma:[n]\to [n]$, it holds that
        \begin{equation*}
            \left\|\mathsf{P}_{\sigma(n)}\ldots \mathsf{P}_{\sigma(1)}\right\|_{\mathsf{op}}\leq 1-\delta.
        \end{equation*}
        Then 
        \begin{equation*}
            \left\|\frac{1}{n}\sum_{i=1}^n \mathsf{P}_i\right\|\leq 1-c_{\ref{thm:converse}}\left(\frac{\delta}{n\log(n/\delta)}\right)^2.
        \end{equation*}
        \item \label{item:one} Suppose that there exists a  permutation $\sigma:[n]\to [n]$ such that
        \begin{equation*}
            \left\|\mathsf{P}_{\sigma(n)}\ldots \mathsf{P}_{\sigma(1)}\right\|_{\mathsf{op}}\leq 1-\delta.
        \end{equation*}
        Then 
        \begin{equation*}
            \left\|\frac{1}{n}\sum_{i=1}^n \mathsf{P}_i\right\|\leq 1-c_{\ref{thm:converse}}\left(\frac{\delta}{n^2\log(n/\delta)}\right)^2.
        \end{equation*}
    \end{enumerate}
\end{theorem}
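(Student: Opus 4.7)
The approach exploits the self-adjointness of $\mathsf{P}_{\mathsf{GD}} = \frac{1}{n}\sum_{i=1}^n \mathsf{P}_i$, which gives $\|\mathsf{P}_{\mathsf{GD}}\|_{\mathsf{op}}^L = \|\mathsf{P}_{\mathsf{GD}}^L\|_{\mathsf{op}}$ for every $L \geq 1$, so bounding the spectral gap of Glauber reduces to bounding the operator norm of a high power. Expanding
\[
\mathsf{P}_{\mathsf{GD}}^L = \frac{1}{n^L}\sum_{(i_1,\ldots,i_L)\in[n]^L} \mathsf{P}_{i_L}\cdots \mathsf{P}_{i_1}
\]
and applying the triangle inequality for operator norms, it suffices to control $\mathbb{E}\,\|\mathsf{P}_{i_L}\cdots \mathsf{P}_{i_1}\|_{\mathsf{op}}$ when $(i_1,\ldots,i_L)$ is drawn uniformly from $[n]^L$. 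The plan is to identify a ``good'' event $\mathcal{G}$ on which \Cref{thm:gen_gap_2} (possibly applied to a sub-product and then passed up to the full product using $\|A\mathsf{Q}B\|_{\mathsf{op}} \leq \|\mathsf{Q}\|_{\mathsf{op}}$ for any contractions $A,B$) yields $\|\mathsf{P}_{i_L}\cdots \mathsf{P}_{i_1}\|_{\mathsf{op}} \leq 1 - \Omega(\delta^2/L)$, and to use the trivial bound of $1$ on $\mathcal{G}^c$. The resulting estimate $\|\mathsf{P}_{\mathsf{GD}}^L\|_{\mathsf{op}} \leq 1 - \Omega(\Pr(\mathcal{G})\,\delta^2/L)$ converts, via Bernoulli's inequality on $L$-th roots, to $\|\mathsf{P}_{\mathsf{GD}}\|_{\mathsf{op}} \leq 1 - \Omega(\Pr(\mathcal{G})\,\delta^2/L^2)$.

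For Part~\ref{item:all}, I would take $L = \Theta(n\log(n/\delta))$ and set $\mathcal{G} = \{\{i_1,\ldots,i_L\} = [n]\}$, which by a standard coupon-collector estimate has probability at least $1/2$. On $\mathcal{G}$, the first-appearance positions define a random permutation $\tau\in S_n$; since the Part~\ref{item:all} hypothesis bounds \emph{every} scan by $1-\delta$, relabeling the projections by $\tau$ allows direct application of \Cref{thm:gen_gap_2} to the full product $\mathsf{P}_{i_L}\cdots\mathsf{P}_{i_1}$, giving operator norm at most $1 - \delta^2/(8(L-n+1))$. Plugging into the master inequality yields the claimed bound of order $\delta^2/(n\log(n/\delta))^2$.

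For Part~\ref{item:one}, I would take $L = \Theta(n^2\log(n/\delta))$ and let $\mathcal{G}$ be the event that there exists a sub-interval $[M_1,M_2] \subseteq [L]$ in which the first appearances of $\sigma(1),\ldots,\sigma(n)$ occur in exactly the order prescribed by $\sigma$, i.e., $i_{M_1} = \sigma(1)$, $i_{M_2} = \sigma(n)$ with $\sigma(n)$ absent from $[M_1,M_2)$, and the first appearance of $\sigma(j)$ in $[M_1,M_2]$ precedes that of $\sigma(j+1)$ for every $j$. After relabeling by $\sigma$, \Cref{thm:gen_gap_2} then applies to the sub-product $\mathsf{P}_{i_{M_2}}\cdots\mathsf{P}_{i_{M_1}}$ and passes up to the full product. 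The main obstacle is securing $\Pr(\mathcal{G}) = \Omega(1)$ under this choice of $L$: demanding the first-appearance order $\sigma$ over the \emph{entire} sequence has probability only $1/n!$, so the argument must leverage the many overlapping candidate sub-intervals afforded by a length-$\Theta(n^2\log(n/\delta))$ random sequence, tracking correlations between attempts anchored at distinct occurrences of $\sigma(1)$ so that the loss $\Omega(\delta^2/L)$ from \Cref{thm:gen_gap_2} indeed exceeds the probability that no attempt succeeds. Once this constant-probability lower bound is established, the same plug-in gives the claimed bound of order $\delta^2/(n^2\log(n/\delta))^2$.
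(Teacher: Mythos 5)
Your overall scheme (self-adjointness, expanding $\mathsf{P}_{\mathsf{GD}}^L$, splitting over a good event with the trivial bound $1$ on its complement, then taking $L$-th roots via Bernoulli) is exactly the paper's, and your Part 1 is correct: on the coupon-collector event you take $\tau$ to be the order of first appearances, trim a prefix/suffix of the product so that the normalizations $k_1=1$, $k_n=L$ of \Cref{thm:gen_gap_2} hold, and since Part 1 assumes a gap for \emph{every} permutation this is legitimate. Your bookkeeping, keeping $\Pr(\mathcal{G})\geq 1/2$ as a multiplier of the gap rather than making $\Pr(\mathcal{G}^c)$ polynomially small in $\delta/n$ as the paper does, is an equally valid (arguably cleaner) variant and yields the stated bound.

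Part 2, however, has a genuine gap, and the route you chose cannot be repaired. Your good event demands a window $[M_1,M_2]$ whose \emph{internal first-appearance order} is exactly $\sigma$. For an i.i.d.\ uniform sequence, the order in which the $n$ symbols first appear after any fixed anchor is a uniformly random permutation, so each anchor (an occurrence of $\sigma(1)$) succeeds with probability $1/(n-1)!$, and by linearity of expectation $\Pr(\mathcal{G})\leq L/(n-1)!$, which is super-exponentially small for $L=\Theta(n^2\log(n/\delta))$. No analysis of correlations between overlapping attempts can give $\Pr(\mathcal{G})=\Omega(1)$, since already the expected number of successful attempts vanishes; keeping this event would only reproduce the trivial exponentially small gap that the theorem is meant to beat. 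The missing idea is a relaxation of \Cref{thm:gen_gap_2}: although its statement takes $k_j$ to be the \emph{minimal} index with $i_{k_j}=j$, its proof never uses minimality --- it only needs some increasing positions $1=k_1<k_2<\dots<k_n=L$ with $i_{k_j}=\sigma(j)$, i.e.\ an embedded occurrence of $\sigma$ as a subsequence (the boundary normalizations are then obtained by exactly the trimming step you already describe). With this relaxed version, the correct good event is simply that the word contains $\sigma$ as an ordered subsequence; its complement has the probability that $\mathsf{Bin}(L,1/n)<n$ under the natural greedy coupling (wait for $\sigma(1)$, then $\sigma(2)$, and so on), hence is $o(1)$ for your choice $L=\Theta(n^2\log(n/\delta))$, and plugging into your master inequality gives the claimed bound of order $\delta^2/(n^2\log(n/\delta))^2$. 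This is precisely how the paper argues Part 2.
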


We will derive these results as a consequence of a general linear-algebraic result relating products of projections to that of an embedded permutation of the projections. Our assumption is that this embedded permutation has a spectral gap, and therefore there is an explicit spectral gap of the larger product that depends only on the original gap and the number of products in the sequence. The argument is conceptually straightforward: if the larger sequence of projections does not behave similarly to the single embedded permutation, this must be because the extra applications of projections noticeably moved the vector. But since any projection that moves a vector must significantly decrease the norm, this movement can also be charged to the spectral gap, and therefore one can obtain a polynomial relationship between the spectral gaps.

\begin{theorem}
\label{thm:gen_gap_2}
        Let $\mathcal{H}$ be a Hilbert space and suppose $\mathsf{P}_1,\ldots, \mathsf{P}_n$ are orthogonal projections with respect to the inner product of $\mathcal{H}$. Suppose that
        \begin{equation}
    \left\|\mathsf{P}_n\ldots \mathsf{P}_1\right\|_{\mathsf{op}}\leq 1-\delta.
\end{equation} 
Let $\mathsf{P}_{i_L}\ldots \mathsf{P}_{i_1}$ be any product of these orthogonal projection operators satisfying the following condition. For each $j\in [n]$, suppose there exists a minimal index $k_j\leq L$ satisfying $i_{k_j}=j$. We further assume these indices satisfy:
\begin{equation}
\label{eq:order_2}
        1=k_1\leq k_2\leq\ldots\leq k_n=L.
\end{equation}
Then, it holds that
\begin{equation*}
    \|\mathsf{P}_{i_L}\ldots \mathsf{P}_{i_1}\|_{\mathsf{op}}\leq 1-\frac{\delta^2}{8(L-n+1)}.
\end{equation*}
\end{theorem}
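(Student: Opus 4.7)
The plan is to compare the full product $\mathsf{Q}_L := \mathsf{P}_{i_L}\cdots \mathsf{P}_{i_1}$ against the ``pure scan'' $\mathsf{R}_L := \mathsf{P}_n\cdots \mathsf{P}_1$ obtained by retaining only the first-appearance factors at positions $k_1<\cdots<k_n$, and to charge the difference to the $L-n$ repeat positions. The hypothesis bounds $\mathsf{R}_L$ directly, while the Pythagorean identity converts every ``repeat movement'' into a drop in $\|\mathsf{Q}_\ell f\|$ that is in our favor. This is the same win--win structure as in \Cref{thm:gen_gap}, but because the splitting now proceeds through a triangle inequality in norms rather than squared norms, we expect the $\delta$-dependence to degrade quadratically, consistent with the statement.

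Concretely, fix a unit vector $f\in \mathcal{H}$, set $\mathsf{Q}_\ell := \mathsf{P}_{i_\ell}\cdots \mathsf{P}_{i_1}$ with $\mathsf{Q}_0 := \mathsf{I}$, and define $\gamma_\ell$ by $\|\mathsf{Q}_\ell f\|^2 = 1 - \gamma_\ell$. The orthogonal decomposition $\mathsf{Q}_{\ell-1} f = \mathsf{Q}_\ell f + (\mathsf{Q}_{\ell-1}f - \mathsf{Q}_\ell f)$ gives $\|\mathsf{Q}_\ell f - \mathsf{Q}_{\ell-1} f\|^2 = \gamma_\ell - \gamma_{\ell-1}$, exactly as in the proof of \Cref{thm:gen_gap}. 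For each $\ell$, let $\mathsf{R}_\ell$ denote the product of those $\mathsf{P}_{i_m}$ with $m \in \{k_1,\ldots,k_n\}\cap [\ell]$ in natural order, so $\mathsf{R}_L = \mathsf{P}_n\cdots \mathsf{P}_1$, and define the discrepancy $\mathsf{D}_\ell := \mathsf{Q}_\ell - \mathsf{R}_\ell$.

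An inductive check yields the key recursion: at a first-appearance step $\ell = k_j$ one has $\mathsf{D}_\ell = \mathsf{P}_j\mathsf{D}_{\ell-1}$, so $\|\mathsf{D}_\ell f\|\leq \|\mathsf{D}_{\ell-1}f\|$, while at a repeat step $\mathsf{D}_\ell = \mathsf{D}_{\ell-1} + (\mathsf{Q}_\ell - \mathsf{Q}_{\ell-1})$. Iterating with the triangle inequality and Cauchy--Schwarz over the $L-n$ repeat positions, then telescoping, gives
\[
    \|\mathsf{D}_L f\|^2 \;\leq\; (L-n)\sum_{\ell=1}^L \|\mathsf{Q}_\ell f - \mathsf{Q}_{\ell-1} f\|^2 \;=\; (L-n)\gamma_L.
\]
Combined with the hypothesis $\|\mathsf{R}_L f\|\leq 1-\delta$, the triangle inequality produces the scalar estimate
\[
    \sqrt{1-\gamma_L} \;=\; \|\mathsf{Q}_L f\| \;\leq\; (1-\delta) + \sqrt{(L-n)\gamma_L}.
\]

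Using $1-\sqrt{1-x}\leq x$ rearranges this to $\delta \leq \gamma_L + \sqrt{(L-n)\gamma_L}$. A two-case analysis finishes: if $\gamma_L \geq \delta/2$ then $\|\mathsf{Q}_L f\|\leq \sqrt{1-\delta/2}\leq 1-\delta/4$, and otherwise $\sqrt{(L-n)\gamma_L}\geq \delta/2$ forces $\gamma_L \geq \delta^2/(4(L-n))$, which via $\sqrt{1-x}\leq 1-x/2$ gives $\|\mathsf{Q}_L f\|\leq 1 - \delta^2/(8(L-n))$. Both bounds dominate $1 - \delta^2/(8(L-n+1))$ for $\delta\leq 1$; the degenerate case $L=n$ makes $\mathsf{D}_L=0$ so the target follows directly from the hypothesis. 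Supremizing over $f$ completes the argument. The main subtlety is setting up the correct reference operator $\mathsf{R}_\ell$ so that the discrepancy contracts at first-appearance steps and only grows by a quantity whose square is controlled by $\gamma_\ell-\gamma_{\ell-1}$ at repeat steps; after that the argument is purely an exercise in the Pythagorean identity, Cauchy--Schwarz, and elementary scalar manipulation.
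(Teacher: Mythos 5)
Your proof is correct and follows essentially the same route as the paper's: both compare $\mathsf{Q}_L f$ against the embedded scan $\mathsf{P}_n\cdots\mathsf{P}_1 f$, charge the deviation to the movements at the $L-n$ repeat positions via the Pythagorean identity and a Cauchy--Schwarz step producing the $\sqrt{L-n}\sqrt{\gamma_L}$ term, and finish with the same two-sided bound on $\gamma_L$ and case analysis. The only difference is bookkeeping: the paper peels projections recursively from the left with nested Cauchy--Schwarz over the blocks between first appearances, whereas your forward induction on the discrepancy $\mathsf{D}_\ell=\mathsf{Q}_\ell-\mathsf{R}_\ell$ (contracting at first-appearance steps, growing by $\|\mathsf{Q}_\ell f-\mathsf{Q}_{\ell-1}f\|$ at repeats) is a slightly cleaner packaging of the identical charging argument.
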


The square is likely unnecessary and is an artifact of the analysis; roughly speaking, the reason it arises is that we work with regular distances using the triangle inequality rather than squared distances as in \Cref{thm:gen_gap} due to less favorable orthogonality properties in the analysis. This issue already manifests even in the simple case $L=n$, where the bound should trivially be $1-\delta$.
\begin{proof}
    We will provide two lower bounds on the spectral gap and show that at least one gives the desired value. As in \Cref{thm:gen_gap}, fix any $f\in \mathcal{H}$ satisfying $\|f\|=1$, set $\mathsf{Q}_{\ell} = P_{i_{\ell}}\cdots P_{i_1}$ and define the sequence $0=\lambda_0\leq \ldots\leq \lambda_L$ via
\begin{equation*}
    \|\mathsf{Q}_{{\ell}}f\|^2 = 1-\lambda_{\ell}.
\end{equation*}
Our goal is to lower bound $\lambda_L$. An identical argument to \Cref{thm:gen_gap} by writing out the orthogonal decompositions implies that for any $\ell$,
\begin{equation}
\label{eq:telescope_2}
    \|\mathsf{Q}_{{\ell+1}}f - \mathsf{Q}_{{\ell}}f\|^2 = \lambda_{\ell+1} - \lambda_{\ell}.
\end{equation}
On the one hand, for any subset $S$ of $[L]$, we trivially have by nonnegativity of the summands that
\begin{equation}
\label{eq:bound_1}
    \lambda_L = \sum_{\ell=1}^L (\lambda_{\ell}-\lambda_{\ell-1})\geq \sum_{\ell\in S} (\lambda_{\ell}-\lambda_{\ell-1}).
\end{equation}

On the other hand, since $i_L=n$ by assumption, we have
\begin{align*}
    \mathsf{Q}_L f &=\mathsf{P}_{i_{L}}\ldots \mathsf{P}_{i_1}f\\
    &= \mathsf{P}_n \mathsf{Q}_{{L-1}}f\\
    &=\mathsf{P}_n\mathsf{P}_{n-1}\ldots \mathsf{P}_1f + \mathsf{P}_n(\mathsf{Q}_{{L-1}}f-\mathsf{P}_{n-1}\ldots \mathsf{P}_1f).
\end{align*}
Taking norms, the triangle inequality, and the fact projections cannot increase norm imply that
\begin{align}
    \|\mathsf{Q}_{{L}}f\|&\leq \|\mathsf{P}_n\mathsf{P}_{n-1}\ldots \mathsf{P}_1f\| + \|\mathsf{P}_n(\mathsf{Q}_{{L-1}}f-\mathsf{P}_{n-1}\ldots \mathsf{P}_1f)\| \nonumber\\
    &\leq 1-\delta + \|\mathsf{Q}_{{L-1}}f-\mathsf{P}_{n-1}\ldots \mathsf{P}_1f\| \label{eq:initial}.
\end{align}
We now treat the last term similarly as before: write
\begin{equation*}
    \mathsf{Q}_{{L-1}}f = \mathsf{Q}_{{k_{n-1}}}f+\sum_{\ell=k_{n-1}}^{L-2} (\mathsf{Q}_{{\ell+1}}f-\mathsf{Q}_{{\ell}}f) ,
\end{equation*}
so a similar set of manipulations with the triangle inequality, Cauchy-Schwarz, and the fact projections can only decrease distance (noting $i_{k_{n-1}}=n-1$ by definition), and \Cref{eq:telescope_2} yields
\begin{align*}
    \|\mathsf{Q}_{{L-1}}f-\mathsf{P}_{n-1}\ldots \mathsf{P}_1f\|&=\left\| \mathsf{Q}_{{k_{n-1}}}f-\mathsf{P}_{n-1}\ldots \mathsf{P}_1f+\sum_{\ell=k_{n-1}}^{L-2} (\mathsf{Q}_{{\ell+1}}f-\mathsf{Q}_{{\ell}}f)\right\|\\
    &\leq \| \mathsf{Q}_{{k_{n-1}}}f-\mathsf{P}_{n-1}\ldots \mathsf{P}_1f\|+\sum_{\ell=k_{n-1}}^{L-2} \|\mathsf{Q}_{{\ell+1}}f-\mathsf{Q}_{{\ell}}f\|\\
    &=\| \mathsf{P}_{n-1}\mathsf{Q}_{{k_{n-1}-1}}f-\mathsf{P}_{n-1}\ldots \mathsf{P}_1f\|+\sqrt{k_{n}-k_{n-1}-1} \sqrt{\sum_{\ell=k_{n-1}}^{L-2} \|\mathsf{Q}_{{\ell+1}}f-\mathsf{Q}_{{\ell}}f\|^2}\\
    &\leq \|\mathsf{Q}_{{k_{n-1}-1}}f-\mathsf{P}_{n-2}\ldots \mathsf{P}_1f\|+\sqrt{k_n-k_{n-1}-1}\sqrt{\sum_{\ell=k_{n-1}}^{L-2} (\lambda_{\ell+1}-\lambda_{\ell})}.
\end{align*}

Proceeding in the same way recursively to deal with the first term (and using $i_1=1$ at the end of the recursion), we then obtain by Cauchy-Schwarz that
\begin{align*}
    \|\mathsf{Q}_{{L-1}}f-\mathsf{P}_{n-1}\ldots \mathsf{P}_1f\|&\leq \sum_{j=1}^{n-1} \sqrt{k_{j+1}-k_{j}-1}\sqrt{\sum_{\ell=k_{j}}^{k_{j+1}-2} (\lambda_{\ell+1}-\lambda_{\ell})} \nonumber\\
    &\leq \sqrt{\sum_{j=1}^{n-1} (k_{j+1}-k_j-1)}\sqrt{\sum_{\ell\in S}(\lambda_{\ell+1}-\lambda_{\ell})}\nonumber\\
    &=\sqrt{L-n}\sqrt{\sum_{\ell\in S}(\lambda_{\ell+1}-\lambda_{\ell})},
\end{align*}
where $S$ is a suitable subset of $[L]$; note that we use the fact that the inner summands for the increments are disjoint across different $k$ by construction. Putting this back with \Cref{eq:initial}, we have
\begin{equation}
\label{eq:bound_2}
    \|\mathsf{P}_{i_{L}}\ldots \mathsf{P}_{i_1}f\|\leq 1-\delta+ \sqrt{L-n}\sqrt{\sum_{\ell\in S}(\lambda_{\ell+1}-\lambda_{\ell})}
\end{equation}

For this subset $S$, let 
\begin{equation*}
    \kappa = \sum_{\ell\in S}(\lambda_{\ell+1}-\lambda_{\ell}).
\end{equation*}
By \Cref{eq:bound_1} and \Cref{eq:bound_2}, we observe that
\begin{equation*}
    \lambda_L \geq \max\left\{\kappa,\delta- \sqrt{L-n}\kappa^{1/2} \right\}.
\end{equation*}
It is not difficult to see that the right hand side  has value at least $\delta^2/4(L-n+1)$ by a simple case analysis (note  though that the bound improves to $\delta$ when $L=n$, as it should). Therefore, we conclude that 
\begin{equation*}
    \lambda_L\geq \frac{\delta^2}{4(L-n+1)}
\end{equation*}
as claimed, which yields the desired operator norm bound (without the square) by Bernoulli's inequality.
\end{proof}

We now show how to derive a quantitative spectral gap for the Glauber dynamics.

\begin{proof}[Proof of \Cref{thm:converse}]
    The idea is to take powers and argue that most of the terms contain one of the permutations as an embedded subsequence that is assumed to have the requisite gap. The spectral gap from the previous line will decay linearly in the power, but the probability of failing to contain the subsequence will decay like a large polynomial, so we can set parameters appropriately. We may assume $n\geq 2$ throughout.

    More formally, since by self-adjointness, we have
    \begin{equation*}
        \left\|\frac{1}{n}\sum_{i=1}^n \mathsf{P}_i\right\|^L =\left\|\left(\frac{1}{n}\sum_{i=1}^n \mathsf{P}_i\right)^L\right\| ,
    \end{equation*}
    it will suffice to upper bound the latter. For any $L\geq n$, we first write
    \begin{equation*}
        \left(\frac{1}{n}\sum_{i=1}^n \mathsf{P}_i \right)^L = \frac{1}{n^L}\sum_{i_1,\ldots,i_L=1}^n \mathsf{P}_{i_L}\ldots \mathsf{P}_{i_1}.
    \end{equation*}

    We now choose parameters for each case:
    \begin{itemize}
        \item First suppose the condition of \Cref{item:all} so that all scans have an operator norm gap. For any $L\geq n$, define 
        \begin{equation*}
            \mathcal{A} =\left\{(i_L,\ldots,i_1)\in [n]^L: [n] = \{i_L,\ldots,i_1\}\right\}.
        \end{equation*}
        In words, this is the set of all sequences of indices containing all indices in $[n]$. By standard coupon collector bounds, if we choose $L=5n \log(200n/\delta)$, then
        \begin{equation*}
            \Pr((i_L,\ldots,i_1)\not\in \mathcal{A})\leq \frac{\delta^{3}}{200n^{3}}.
        \end{equation*}
        On the other hand, for any such tuple $(i_L,\ldots,i_1)\in \mathcal{A}$, we claim
        \Cref{thm:gen_gap_2} implies that
        \begin{equation*}
            \|\mathsf{P}_{i_L}\ldots \mathsf{P}_{i_1}\|_{\mathsf{op}}\leq 1-\frac{\delta^2}{8L}=1-\frac{\delta^2}{40n\log(200n/\delta)}.
        \end{equation*}
        Indeed, this tuple contains all elements in some order by definition of $\mathcal{A}$, and we may assume $i_L=\sigma(n)$ is the last element found and $i_1=\sigma(1)$ is the first element of this permutation.  The reason is that the operator norm can only decrease by removing a prefix (on the left) of operators since projections cannot increase the norm and by removing any suffix of projectors (on the right) since these projectors map the unit ball of $\mathcal{H}$ to itself. But then by symmetry, we can directly apply \Cref{thm:gen_gap_2} to conclude the bound. It follows that
        \begin{align*}
            \left\|\left(\frac{1}{n}\sum_{i=1}^n \mathsf{P}_i\right)^L\right\|&\leq \frac{1}{n^L}\sum_{(i_L,\ldots,i_1)\in \mathcal{A}} \|\mathsf{P}_{i_L}\ldots \mathsf{P}_{i_1}\|_{\mathsf{op}}+\frac{1}{n^L}\sum_{(i_L,\ldots,i_1)\in \mathcal{A}^c} \|\mathsf{P}_{i_L}\ldots \mathsf{P}_{i_1}\|_{\mathsf{op}}\\
            &\leq 1-\frac{\delta^2}{40n\log(200n/\delta)}+\Pr((i_L,\ldots,i_1)\not\in \mathcal{A})\\
            &\leq 1-\frac{\delta^2}{40n\log(200n/\delta)}+\frac{\delta^{3}}{200n^3}\\
            &\leq 1-\frac{\delta^2}{100n\log(200n/\delta)}.
        \end{align*}
        Here, we simply apply the triangle inequality and crudely bound the operator norms by $1$ for those elements not in $\mathcal{A}$. Therefore,
        \begin{equation*}
            \left\|\frac{1}{n}\sum_{i=1}^n \mathsf{P}_i\right\|\leq \left(1-\frac{\delta^2}{100n\log(200n/\delta)}\right)^{1/L}\leq 1-\frac{c\delta^2}{n^2\log^2(n/\delta)}
        \end{equation*}
        for some absolute constant $c>0$.

        \item Next, we assume that there exists a permutation $\sigma:[n]\to [n]$ such that the associated product of the projections has operator norm at most $1-\delta$. We use the same approach as last time, now just with $L = Cn^2\log(100Cn/\delta)$ for a suitable constant $C>0$ to be determined later. Let $\mathcal{A}$ denote the set of tuples $(i_L,\ldots,i_1)$ containing $\sigma$ as an ordered subsequence. Note that the probability that $(i_L,\ldots,i_1)\not\in \mathcal{A}$ is precisely the probability that a $\mathsf{Bin}(L,1/n)$ random variable is less than $n$ by the natural coupling that interprets each success as sampling the next element of the permutation $\sigma$. For this choice of $L$, it is easy to see from Hoeffding's inequality that if $C$ is a large enough constant, then we have
        \begin{equation*}
            \Pr((i_L,\ldots,i_1)\not\in \mathcal{A})\leq \delta^{100}/100Cn^{100}.
        \end{equation*}
        The same argument as the previous case asserts that for any $(i_L,\ldots,i_1)\in \mathcal{A}$, we have an operator norm bound of $1-\delta^2/8Cn^2\log(100Cn/\delta)$. Therefore, the same argument and calculation splitting the sum across $\mathcal{A}$ yields
        \begin{align*}
            \left\|\left(\frac{1}{n}\sum_{i=1}^n \mathsf{P}_i\right)^L\right\|&\leq \frac{1}{n^L}\sum_{(i_L,\ldots,i_1)\in \mathcal{A}} \|\mathsf{P}_{i_L}\ldots \mathsf{P}_{i_1}\|_{\mathsf{op}}+\frac{1}{n^L}\sum_{(i_L,\ldots,i_1)\in \mathcal{A}^c} \|\mathsf{P}_{i_L}\ldots \mathsf{P}_{i_1}\|_{\mathsf{op}}\\
            &\leq 1-\frac{\delta^2}{8Cn^2\log(100Cn/\delta)}+\Pr((i_L,\ldots,i_1)\not\in \mathcal{A})\\
            &\leq 1-\frac{\delta^2}{8Cn^2\log(100Cn/\delta)}+\frac{\delta^{100}}{100Cn^{100}}\\
            &\leq 1-\frac{c\delta^2}{n^2\log(n/\delta)}.
        \end{align*}
        for a suitable constant $c>0$. Taking $1/L$ norms again gives the stated bound.\qedhere
    \end{itemize}
\end{proof}

We may now conclude our mixing time comparison:

\begin{corollary}[\Cref{thm:mixing_2}, formal]
    The following holds for any $0<\varepsilon\leq 1$. Let $\mathcal{X}=\mathcal{X}_1\times \ldots\times \mathcal{X}_n$ be a product of finite state spaces and let $\pi$ be any distribution on $\mathcal{X}$. For a permutation $\sigma$, again let $t_{\mathsf{mix}}^{\mathsf{SS}(\sigma)}(\cdot)$ denote the mixing time of the systematic scan with ordering $\sigma$. Then we have the following bounds:
    \begin{gather*}
        t^{\mathsf{GD}}_{\mathsf{mix}}(\varepsilon)\leq \max_{\sigma}\widetilde{O}\left(n^4(t_{\mathsf{mix}}^{\mathsf{SS}(\sigma)}(1/10))^4\right)\cdot \log\left(\frac{1}{\varepsilon\pi_{\min}}\right)\\
        t^{\mathsf{GD}}_{\mathsf{mix}}(\varepsilon)\leq \min_{\sigma}\widetilde{O}\left(n^6(t_{\mathsf{mix}}^{\mathsf{SS}(\sigma)}(1/10))^4\right)\cdot \log\left(\frac{1}{\varepsilon\pi_{\min}}\right).
    \end{gather*}
\end{corollary}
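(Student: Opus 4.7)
The plan is to chain together the results already established in the paper: first pass from the assumed mixing time bound on the systematic scan to a spectral gap bound on the scan, then lift this to a spectral gap bound on Glauber via \Cref{thm:converse}, and finally convert back to a mixing time bound on Glauber via the reversible half of \Cref{thm:markov_mixing}.

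More concretely, suppose $T := t_{\mathsf{mix}}^{\mathsf{SS}(\sigma)}(1/10)$. First I would apply \Cref{thm:nr_lb} to $\mathsf{P}_{\sigma}$ to conclude that the second smallest singular value of its Laplacian satisfies $\widetilde{\gamma_{\mathsf{P}_{\sigma}}} \geq c/T$ for some absolute constant $c > 0$. Next, I would invoke \Cref{lem:gibbs_comp} (which holds for any scan ordering by relabeling the coordinates) to deduce
\begin{equation*}
    \gamma_{\mathsf{P}_{\sigma}} \geq \frac{\widetilde{\gamma_{\mathsf{P}_{\sigma}}}^{\,2}}{2n} \geq \frac{c^2}{2nT^2}.
\end{equation*}
This gives a genuine operator-norm gap $\delta = \Omega(1/(nT^2))$ for the product $\mathsf{P}_{\sigma(n)} \cdots \mathsf{P}_{\sigma(1)}$ of orthogonal projections acting on $L^2(\mathcal{X}, \pi)$, which is precisely the hypothesis needed for \Cref{thm:converse}.

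At this point the two cases diverge. If the bound $T$ holds for every permutation $\sigma$, I would apply \Cref{item:all} of \Cref{thm:converse} with $\delta = \Omega(1/(nT^2))$ to obtain a Glauber spectral gap of order
\begin{equation*}
    \gamma_{\mathsf{GD}} \gtrsim \left(\frac{\delta}{n \log(n/\delta)}\right)^2 = \widetilde{\Omega}\!\left(\frac{1}{n^4 T^4}\right).
\end{equation*}
If instead only a single $\sigma$ is known to satisfy the mixing bound, I would apply \Cref{item:one} of \Cref{thm:converse} to get $\gamma_{\mathsf{GD}} \geq \widetilde{\Omega}(1/(n^6 T^4))$. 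Finally, since Glauber dynamics is reversible, \Cref{thm:markov_mixing} yields
\begin{equation*}
    t^{\mathsf{GD}}_{\mathsf{mix}}(\varepsilon) \leq \frac{1}{\gamma_{\mathsf{GD}}} \log\!\left(\frac{1}{\varepsilon \pi_{\min}}\right),
\end{equation*}
which, after substituting the two lower bounds on $\gamma_{\mathsf{GD}}$, gives the stated $\widetilde{O}(n^4 T^4)$ and $\widetilde{O}(n^6 T^4)$ bounds (taking the min/max over $\sigma$ as appropriate).

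There is no genuine technical obstacle here since every piece of machinery is already proven; the only care required is bookkeeping. The one place to be attentive is the use of \Cref{lem:gibbs_comp}, which is stated for the identity permutation but extends to any $\sigma$ by a trivial relabeling of coordinates (equivalently, by symmetry of the proof itself). I would either remark on this explicitly or, in a cleaner write-up, first reduce to the identity permutation by relabeling coordinates so that $\sigma = \mathrm{id}$, apply \Cref{thm:nr_lb} and \Cref{lem:gibbs_comp} verbatim to $\mathsf{P}_n \cdots \mathsf{P}_1$, and then feed the result into \Cref{thm:converse}. The remainder is a direct substitution with the $\log$ factors absorbed into the $\widetilde{O}$ notation.
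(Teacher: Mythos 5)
Your proposal is correct and follows essentially the same route as the paper: the paper's proof is exactly the chain \Cref{thm:nr_lb} (mixing time lower-bounds $\widetilde{\gamma}$), \Cref{lem:gibbs_comp} (converting $\widetilde{\gamma}$ to the operator-norm gap of the scan, losing the factor $2n$ and the square), \Cref{thm:converse} in its two cases, and then the spectral upper bound of \Cref{thm:markov_mixing}. Your bookkeeping of the exponents ($\widetilde{O}(n^4T^4)$ and $\widetilde{O}(n^6T^4)$) and the relabeling remark for non-identity $\sigma$ match the intended argument, so there is nothing to correct.
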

\begin{proof}
    This is an immediate consequence of \Cref{thm:markov_mixing} by using \Cref{thm:converse} to provide a mixing time bound in terms of spectral gaps of appropriate scans, and then applying \Cref{thm:nr_lb} to upper bound in terms of the mixing times.
\end{proof}

We note that we lose polynomial factors since the translation between operator norms and mixing times of operator norms in the non-reversible case is qualititatively weaker than in the reversible case; this is compounded by the polynomial losses in the spectral gaps from the above. It would be very interesting to determine the precise polynomial relationship between these quantities.

\section{Optimality of Results}
\label{sec:optimality}

We now show that none of the assumptions or conclusions of \Cref{cor:spectral_scan} can be significantly improved, which also manifests in tightness of \Cref{thm:mixing}. In particular, the result crucially exploits the orthogonal projection properties and the factor of $n$ loss is necessary. For future work, it would be quite interesting to understand general quantitative relationships for other functional inequalities, like (modified) log-Sobolev inequalities, as well as whether one can prove comparable mixing bounds (per step, up to necessary logarithmic factors in dimension) by leveraging system-specific properties.\\

\noindent\textbf{Necessity of Projections.} First, we observe that it is crucial that $\mathsf{P}_i$ are projections rather than arbitrary positive semi-definite operators. In fact, the following simple example of Recht and R{\'{e}}~\cite{DBLP:journals/jmlr/RechtR12} shows it is possible for the assumption of \Cref{cor:spectral_scan} to hold with $\delta>0$ for general linear operators, yet $\mathsf{P}_n\ldots \mathsf{P}_1$ has operator norm exponential in $n$. 

Given $n$ and $\delta\in [0,1]$, let $\omega_n=\pi/n$ and for $k=1,\ldots,n,$ define
\begin{equation*}
    a_{k,n}=\sqrt{2(1-\delta)} (\cos(k\omega_n),\sin(k\omega_n))^T\in \mathbb{R}^2.
\end{equation*}
Define $A_i=a_{k,n}a_{k,n}^T$; these matrices are obviously positive semi-definite, but are not projections (with respect to the usual inner product structure) unless the $a_{k,n}$ are unit vectors (i.e. $\delta=1/2$). One can check from Recht and R{\'{e}}~\cite{DBLP:journals/jmlr/RechtR12} that these vectors are isotropic up to scaling:
    \begin{equation*}
        \frac{1}{n}\sum_{i=1}^n A_i = (1-\delta)\cdot I,
    \end{equation*}
    while 
    \begin{equation*}
        A_n\ldots A_1=(2(1-\delta)\cos(\omega_n))^{n-1}a_{n,n}a_{1,n}^T.
    \end{equation*}
    Since this matrix is rank-one, it is immediate that 
    \begin{equation*}
    \|A_n\ldots A_1\|_{\mathsf{op}} = (2(1-\delta))^n\cos(\omega_n)^{n-1}.
    \end{equation*}
    Note that
    \begin{equation*}
        \lim_{n\to \infty} \cos(\omega_n)^{n-1}=1,
    \end{equation*}
    so this certifies the exponentially large operator norm of the matrix product for any fixed $0<\delta<1/2$. \\
    
    \noindent\textbf{Optimality of \Cref{cor:spectral_scan}.} In the previous example, note that when $\delta=1/2$, the $A_i$ are indeed orthogonal projection operators, so are within the purview of the general linear-algebraic result. In this case, we have $\cos(\omega_n)^{n-1}=1-\frac{\pi^2}{2n}+O\left(\frac{1}{n^2}\right)$ and so the factor of $n$ loss in \Cref{cor:spectral_scan} is necessary.\\

    \noindent\textbf{Optimality of \Cref{thm:mixing}.}
    Finally, we observe that the loss in mixing times from \Cref{thm:mixing} can be realized in natural spin systems via the following simple example of Roberts and Rosenthal~\cite{roberts2015surprising}, up to the logarithmic factor that is inherent to spectral bounds on mixing times. For completeness, we describe this example and fill in the details of their mixing time analysis for the systematic scan. 
    
    Let $\pi$ be the uniform distribution on the set of spin configurations $\bm{x}\in \{0,1\}^n$ such that $\sum_{i=1}^n x_i\leq 1$. This is the so-called \emph{hardcore model} of independent sets on the complete graph $K_n$ on $n$ nodes with fugacity $\lambda=1$. In particular, the set of configurations with positive (and uniform) probability is $\{\bm{0},\bm{e}_1,\ldots,\bm{e}_n\}$, where $\bm{e}_j$ is the $j$th standard basis vector in $\mathbb{R}^n$. The associated Gibbs sampler is easily seen to act on configurations as follows when re-randomizing a spin $i$ at time $t$: if $X^t\in \{\bm{0},\bm{e}_i\}$, $X^{t+1}$ is uniform in $\{\bm{0},\bm{e}_i\}$, while otherwise $X^{t+1}=X^t$.

    \begin{theorem}[\cite{roberts2015surprising}, Section 5]
        There exists an absolute constant $c>0$ such that for the hardcore model on $K_n$ with fugacity $1$, and for any scan order $\sigma:[n]\to [n]$,
        \begin{equation*}
            t^{\mathsf{SS}(\sigma)}_{\mathrm{mix}}(1/4)\geq cnt^{\mathsf{GD}}_{\mathrm{mix}}(1/4).
        \end{equation*}
        In particular, \Cref{thm:mixing} is tight up to a factor of $\log(1/\pi_{\mathrm{min}})$.
    \end{theorem}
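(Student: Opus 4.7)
The plan is to directly carry out the analysis of Roberts and Rosenthal~\cite{roberts2015surprising} for this specific hardcore model. By the vertex-transitivity of $K_n$, the hardcore measure is invariant under permutation of coordinates, so $\mathsf{P}_{\sigma}$ and $\mathsf{P}_{\mathrm{id}}$ are conjugate via the corresponding permutation operator on configurations and hence have identical mixing times. I would therefore reduce to $\sigma = \mathrm{id}$ and prove the claim in this case.

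For the Glauber dynamics upper bound, the chain is a ``star'' on the $n+1$ stationary states $\{\bm{0}, \bm{e}_1, \ldots, \bm{e}_n\}$, with holding probability $1 - 1/(2n)$ at each leaf and $1/2$ at $\bm{0}$. Decomposing $L^2(\pi)$ into the symmetric subspace (functions averaging to zero over the leaves, acted on with eigenvalue $1 - 1/(2n)$) and the two-dimensional radial subspace (with nontrivial eigenvalue $1/2 - 1/(2n)$), the dominant eigenvalue is $1 - 1/(2n)$. Starting from $\bm{e}_1$, a direct computation shows the total variation distance from stationarity decays like $(1 - 1/(2n))^t$, yielding $t^{\mathsf{GD}}_{\mathrm{mix}}(1/4) \leq Cn$ for an absolute constant $C$.

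For the systematic scan lower bound, the key step is to introduce the statistic $T(X) = \max\{i : X_i = 1\}$ (with $T(\bm{0}) = 0$) and to observe that $T$ itself evolves as a Markov chain on $\{0, 1, \ldots, n\}$ under one scan. Unpacking the one-shot conditional sampler structure, from state $T = k \geq 1$ the next scan leaves $T$ at $k$ with probability $1/2$ (coordinate $k$ is resampled back to $1$), moves $T$ to $j > k$ with probability $2^{-(j-k+1)}$ (coordinate $k$ flips to $0$ and some later coordinate $j$ is set to $1$), or resets $T$ to $0$ with probability $2^{-(n-k+1)}$. Conditional on not resetting, a direct geometric-series calculation gives that the increment $T_{t+1} - T_t$ has mean $1$ and variance $2$, so $T_t$ performs essentially a drifted random walk.

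Starting from $X_0 = \bm{e}_n$, the first reset from $T = n$ occurs within a geometric$(1/2)$ number of scans, after which $T_t$ is well-approximated by this drifted walk. Applying Chebyshev's inequality to $T_t - \mathbb{E}[T_t]$ (with $\mathbb{E}[T_t] \leq t + O(1)$ and $\mathrm{Var}(T_t) \leq 2t$), combined with the fact that the ``trapped-at-$n$'' probability is at most $2^{-t}$, gives $\mathbb{P}(T_t > n/2) = o(1)$ for all $t \leq cn$ with $c > 0$ a sufficiently small absolute constant. Since $\pi$ is uniform on $n+1$ states, $\pi(\{T \leq n/2\}) = 1/2 + O(1/n)$, so $d_{\mathrm{TV}}(\mathsf{P}_{\sigma}^{t} \delta_{\bm{e}_n}, \pi) \geq 1/2 - o(1) > 1/4$ for $n$ sufficiently large. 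This yields $t^{\mathsf{SS}(\sigma)}_{\mathrm{mix}}(1/4) \geq cn^2$ Gibbs steps, and combined with the Glauber upper bound gives the claimed comparison. The main technical obstacle I anticipate is carefully tracking the boundary behavior at $T = n$ to ensure the drifted-walk approximation remains valid after potential returns near $n$; however, since the trapped-at-$n$ probability decays geometrically and the walk has drift $1$ well below the ceiling of $n$ throughout the regime $t \leq cn$, these corrections are lower order.
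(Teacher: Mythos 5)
Your reduction to the identity permutation, your $O(n)$ bound for Glauber, and your description of the induced chain $T_t$ at scan boundaries (hold with probability $1/2$, advance by $m$ with probability $2^{-(m+1)}$, reset past the boundary) are all correct and match the mechanism in the paper. The problem is the time horizon. By restricting to $t\leq cn$ scans you only prove that the scan is far from $\pi$ after $cn$ applications of $\mathsf{P}_\sigma$, i.e.\ $t^{\mathsf{SS}(\sigma)}_{\mathrm{mix}}(1/4)\geq cn$ in the units of \Cref{thm:mixing}, where the scan's mixing time counts full sweeps (each sweep being $n$ site updates). Since $t^{\mathsf{GD}}_{\mathrm{mix}}(1/4)=\Theta(n)$ for this model, the theorem requires a lower bound of order $n\cdot t^{\mathsf{GD}}_{\mathrm{mix}}(1/4)=\Omega(n^2)$ \emph{sweeps} (equivalently $\Omega(n^3)$ site updates), which is exactly what the paper proves; your bound is a factor of $n$ short, and your final conversion (``$\geq cn^2$ Gibbs steps, which gives the claimed comparison'') silently switches units in a way that no longer demonstrates tightness of \Cref{thm:mixing}.

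The missing idea is precisely the regime you set aside as a boundary correction: one must show non-mixing throughout $n\ll t\ll n^2$ sweeps, after the occupied position has wrapped around the cycle many times. Drift alone cannot do this — after $\Theta(n)$ sweeps the mean position has already traversed the whole index set — so the obstruction to mixing is that the position \emph{modulo} $n$ has fluctuations only of order $\sqrt{t}\ll n$. The paper (following Roberts--Rosenthal) implements this by writing the time of the $s$th up-jump as $\nu_s=n\sum_{i\leq s}X_i+\sum_{i\leq s}Y_i$ with independent $\mathsf{Geom}(1/2)$ variables, observing that the first sum vanishes mod $n$, and using Chebyshev together with Kolmogorov's maximal inequality to show that at time $c'n^2$ sweeps the residue $\nu_{s^*}\Mod n$ lies in an interval of length $O(\sqrt{c'}\,n)\leq n/4$ with probability $3/4$, whence the total variation distance still exceeds $1/4$. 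Your drifted-walk/Chebyshev estimate is the right tool, but it must be applied to $T_t\bmod n$ (equivalently to $\sum_i Y_i$) uniformly over the $\Theta(n)$ resets occurring up to time $c'n^2$ sweeps, not to $T_t$ itself on a horizon short enough that no reset occurs; the statement ``the drift stays well below the ceiling for $t\leq cn$'' is true but is exactly what caps your bound at $\Omega(n)$ sweeps.
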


\begin{proof}
    We follow the ideas sketched by Roberts and Rosenthal~\cite{roberts2015surprising}. For $t_{\mathrm{mix}}^{\mathsf{GD}}$, their work shows that the total variation distance between $T$ steps of Glauber dynamics and $\pi$ is bounded by $\Pr(U+V>T)$, where $U\sim \mathsf{Geom}(1/2n)$ and $V\sim \mathsf{Geom}(1/2)$. It follows that $t_{\mathrm{mix}}^{\mathsf{GD}}(1/4)\leq C_{\mathsf{GD}}n$ for a large enough constant $C_{\mathsf{GD}}>0$. Therefore, it suffices to show that, for some small enough constant $c'>0$, the total variation distance between $c'n^2$ full sweeps of the systematic scan dynamics and $\pi$ is greater than $1/4$. We will prove this for $\sigma:[n]\to [n]$ the identity permutation; it is easy to see that the argument holds for any other scan order by symmetry up to an appropriate relabeling of sites.

    For this, we follow the argument of Roberts and Rosenthal~\cite{roberts2015surprising} that describes an alternative way to view the dynamics of the systematic scan. Suppose $X^0=\bm{e}_n$. Define the following stopping times for $s=1,\ldots$:

    \begin{gather*}
        \tau_1 = \min\{t\geq 1: X^t =\bm{0}\}\quad\quad \nu_1 =\min\{t\geq \tau_1: X^t \neq\bm{0}\}\\
        \tau_{s+1}=\min\{t\geq \nu_s: X^t =\bm{0}\}\quad\quad \nu_{s+1}=\min\{t\geq \tau_{s+1}: X^t\neq\bm{0}\}
    \end{gather*}

    The idea is as follows: $\tau_s$ is the $s$th time that the configuration drops down to $\bm{0}$. Note that this can only occur if $X^{\tau_s-1}=\bm{e}_i$ for some $i$ and $\tau_s\Mod n = i$; this is because for a configuration to drop to $\bm{0}$, the re-randomization operator of that stage of the scan must be equal to the unique nonzero coordinate of the current configuration. Once this happens, each of the subsequent rereandomization operators $i+1,i+2,\ldots$ taken $\Mod n$ each have a $1/2$ chance of moving the configuration back up to the corresponding element $\{\bm{e}_1,\ldots,\bm{e}_n\}$. 

    From this discussion, it is apparent that the law of $\tau_1$ is $n\cdot \mathsf{Geom}(1/2)$; the configuration can only first drop down to $\bm{0}$ at the applications of $\mathsf{P}_n$, each such application does so with probability $1/2$, and one must do the full scan between these trials starting with $X^0$ by our choice of initial configuration. More generally, the law of $\tau_{s+1}-\nu_s$ is an independent $n\cdot \mathsf{Geom}(1/2)$ since once $\nu_s$ occurs as a result of applying the rerandomization of site $\nu_s\Mod n$, the dynamics again must do the full scan before returning to this rerandomization, and these are the only ones that can drop the configuration down to $\bm{0}$. For similar reasons, the laws of $\nu_s-\tau_s$ are independent $\mathsf{Geom}(1/2)$; once the configuration drops to $\bm{0}$, each subsequent rerandomization has probability $1/2$ of causing a transition. It follows that the joint laws of the $\nu_s$ can be expressed as
    \begin{equation}
    \label{eq:law}
        \nu_s = n\sum_{i=1}^s X_i+\sum_{i=1}^{s} Y_i,
    \end{equation}
    where each $X_s,Y_s\sim \mathsf{Geom}(1/2)$ and are all independent. Note that $\mathbb{E}[\nu_s]=2(n+1)s$.

    Let $s^*$ denote the largest index $s$ such that $\nu_s\leq c'n^3$; while this is not a stopping time, it follows that $X^{c'n^3}\in \{\bm{0},\bm{e}_{\nu_{s^*}\Mod n}\}$ depending on whether the configuration managed to drop back down to $\bm{0}$. We claim that for $c'>0$ sufficiently small, there exists a subset $A$ of $[n]$ of size at most $n/4-1$ such that 
    \begin{equation}
    \label{eq:st_concentration}
        \Pr\left(\nu_{s^*}\Mod n\in A\right)\geq 3/4.
    \end{equation}
    If \Cref{eq:st_concentration} holds, this implies the total variation between $c'n^2$ full systematic scans and $\pi$ exceeds $1/2$ since $\pi(\{\bm{e}_i:i\in A\}\cup \{\bm{0}\})\leq 1/4$ while the dynamics has probability at least $3/4$ of being in this set.

    To prove \Cref{eq:st_concentration}, observe that by independence of the geometric random variables in \Cref{eq:law},
    \begin{equation*}
        \mathsf{Var}(\nu_s)=2(n^2+1)s.
    \end{equation*}
    It follows by Chebyshev's inequality that
    \begin{equation*}
    \Pr\left(\vert \nu_s-2(n+1)s\vert \geq 100n\sqrt{s}\right)\leq 1/20.
    \end{equation*}
    Applying this to $s_{\pm}=c'n^2/2\pm \Theta(\sqrt{c'}n)$ shows that
    \begin{equation}
    \label{eq:good_s}
        \Pr(s^*\in [s_-,s_+])\geq 9/10.
    \end{equation}

    To conclude, the Kolmogorov maximal inequality applied to the martingale
    \begin{equation*}
        \sum_{i=1}^s Y_s-2s
    \end{equation*}
    implies that for any $\lambda>0$,
    \begin{equation*}
        \Pr\left(\max_{s\in [0,s_+]} \left\vert \sum_{i=1}^s Y_i-2s\right\vert\geq \lambda\right)\leq \frac{2c'n^2+O(\sqrt{c}n)}{\lambda^2}.
    \end{equation*}
    Taking $\lambda^* = 100\sqrt{c'}n$, we find that
    \begin{equation}
    \label{eq:good_y}
        \Pr\left(\max_{s\in [s_-,s_+]} \left\vert \sum_{i=1}^s Y_i-2s\right\vert\geq \lambda^*\right)\leq 1/10.
    \end{equation}
    Putting together \Cref{eq:good_s} and \Cref{eq:good_y} implies that
    \begin{equation}
    \label{eq:good_A}
        \Pr\left(\sum_{i=1}^{s^*} Y_i \in [2s_--\lambda^*,2s_++\lambda^*]\right)\geq 4/5.
    \end{equation}
    On this event, note that
    \begin{equation*}
        \nu_{s^*}\Mod n = \left(n\sum_{i=1}^{s^*} X_i+\sum_{i=1}^{s^*} Y_i\right)\Mod n=\sum_{i=1}^{s^*} Y_i\Mod n.
    \end{equation*}
    Since $[2s_--\lambda^*,2s_++\lambda^*]$ is a contiguous interval of length $O(\sqrt{c'}n)$, the image $A$ of this set under the $\Mod n$ map also has at most $n/4-1$ elements if $c'$ is a small enough constant, and thus \Cref{eq:good_A} demonstrates $A$ satisfies \Cref{eq:st_concentration}. By the discussion above, this verifies $t^{\mathsf{SS}}_{\mathrm{mix}}(1/4)\geq c'n^2\geq cnt_{\mathrm{mix}}^{\mathsf{GD}}(1/4)$
    so long as $c< c'/C_{\mathsf{GD}}$.
    
\end{proof}

\bibliographystyle{alpha}
\bibliography{bibliography}
\end{document}